\documentclass[10pt,reqno]{amsart}
\usepackage[utf8]{inputenc}
\usepackage[english]{babel}
\usepackage{amsmath, amssymb, amsthm, comment}
\usepackage{mathrsfs}
\usepackage{enumitem}
\usepackage{graphicx}
\usepackage{tikz-cd}
\usepackage{xcolor}
\numberwithin{equation}{section} %para numerar ecuaciones por sección

\usepackage{diagbox} % to have the diagonal separation on the 1,1 entry on a table
\usepackage{tensor}  %para escribir tensores y que los superíndices no queden debajo de los subíndices

\usepackage{hyperref} %this have to be load BEFORE amsrefs

\usepackage[alphabetic]{amsrefs} %alph is for references as [NameYear]
\theoremstyle{plain}
\newtheorem{thm}{Theorem}[section]
\newtheorem{prop}[thm]{Proposition}
\newtheorem{lemma}[thm]{Lemma}

\theoremstyle{definition}

\allowdisplaybreaks

\newcommand{\C}{\mathbb{C}}

\def\XXint#1#2#3{{\setbox0=\hbox{$#1{#2#3}{\int}$ }
\vcenter{\hbox{$#2#3$ }}\kern-.6\wd0}}

\usepackage{scalerel,stackengine}
\stackMath
\newcommand\hhat[1]{%
\savestack{\tmpbox}{\stretchto{%
  \scaleto{%
    \scalerel*[\widthof{\ensuremath{#1}}]{\kern.1pt\mathchar"0362\kern.1pt}%
    {\rule{0ex}{\textheight}}%WIDTH-LIMITED CIRCUMFLEX
  }{\textheight}% 
}{2.4ex}}%
\stackon[-6.9pt]{#1}{\tmpbox}%
}
\newcommand{\s}{\mathcal{S}}

\title[Partial data Calderon for quasilinear conductivities]{Partial data Calder\'{o}n problem for quasilinear conductivities in dimension 2}

\author{Ruirui Wu}
\author{Ting Zhou}

\begin{document}

\begin{abstract}
In this paper, we prove a uniqueness result for the partial data Calder\'{o}n problem with quasilinear conductivity in two dimensions. The proof is based on higher-order linearization and the use of CGO solutions in dimension two that vanish on part of the boundary. Since derivatives of the solutions appear in the integral identity, we need improved remainder estimates, for which we introduce a modification in the choice of the phase, analogous to limiting Carleman weights. We also analyze how combinations of phases produce specific patterns in the products of solutions, which allows us to apply both stationary and nonstationary phase arguments to recover the conductivity.
\end{abstract}

\maketitle

\section{Introduction}

Let $\Omega \subset \mathbb{R}^2$ be a connected bounded open set with $C^\infty$ boundary. 
We consider the boundary value problem for the following conductivity equation with isotropic quasilinear conducticity:
\begin{equation}
\begin{cases}
\operatorname{div}\!\left(\gamma(x,u,\nabla u)\,\nabla u\right)=0 & \text{in }\Omega,\\[4pt]
u=f & \text{on }\partial\Omega.
\end{cases}
\tag{1.1}
\end{equation}

Here, we assume that the function 
\[
\gamma:\overline{\Omega}\times \mathbb{C}\times \mathbb{C}^n\to\mathbb{C}
\]
satisfies the following conditions:

\begin{itemize}
\item[(i)] for each $(x,z)\in \overline{\Omega}\times\mathbb{C}$, the map 
\(\tau\mapsto \gamma(x,\tau,z)\) is holomorphic with values in the Hölder space 
\(C^{1,\alpha}(\Omega)\) for some \(0<\alpha<1\);

\item[(ii)] \(\gamma(x,\tau,0)=1\), for all \(x\in \Omega\) and all \(\tau\in \mathbb{C}\).
\end{itemize}

Under the assumptions (i) and (ii), there exist \(\delta>0\) and \(C>0\) such that 
when 
\[
f\in B_{\delta}(\partial\Omega):=\{f\in C^{2,\alpha}(\partial\Omega): 
\|f\|_{C^{2,\alpha}(\partial\Omega)}<\delta\},
\]
the problem (1.1) has a unique solution 
\(u=u_{f}\in C^{2,\alpha}(\overline{\Omega})\). 
This local well-posedness of the forward problem is given in \cite{CFKKU2021calderon}, Appendix B.

Let \(\Gamma\subset \partial\Omega\) be an arbitrary non-empty open subset of the boundary. 
Associated to the problem (1.1), we define the partial Dirichlet-to–Neumann map
\[
\Lambda^\Gamma_\gamma(f)
=\left(\gamma(x,u,\nabla u)\,\partial_\nu u\right)\big|_{\Gamma},
\]
where \(f\in B_{\delta}(\partial\Omega)\) with \(\operatorname{supp}(f)\subset\Gamma\) 
. Here \(\nu\) is the unit outer normal to the boundary.

In this work, we prove the following uniqueness theorem for the partial data:
\begin{thm}
Let $\Omega\subset\mathbb{R}^2$ be a connected bounded open set with $C^\infty$
boundary, and let \(\Gamma\subset\partial\Omega\) be an arbitrary open non-empty subset of 
the boundary. Assume that 
\[
\gamma_1,\gamma_2:\overline{\Omega}\times\mathbb{C}\times\mathbb{C}^n\to\mathbb{C}
\]
satisfy assumptions (i) and (ii). If  
$\gamma_1, \gamma_2 $ agrees to infinite order on the boundary, and
\[
\Lambda^\Gamma_{\gamma_1}(f)=\Lambda^\Gamma_{\gamma_2}(f),
\qquad 
\forall f\in B_{\delta}(\partial\Omega),\ 
\operatorname{supp}(f)\subset\Gamma,
\]
Then we have
\[
\gamma_1=\gamma_2 \quad\text{in } \overline{\Omega}\times\mathbb{C}\times\mathbb{C}^n.
\] 
\end{thm}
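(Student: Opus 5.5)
We will use higher order linearization together with the two-dimensional partial data CGO solutions of Imanuvilov--Uhlmann--Yamamoto type. By (i) and (ii) we can Taylor expand
\[
\gamma_j(x,\tau,z)=1+\sum_{k\ge1}\gamma_j^{(k)}(x,\tau)[z^{\otimes k}],
\]
where $\gamma_j^{(k)}(x,\tau)$ is a symmetric $k$-linear form in $z$. We take boundary data $f=\sum_{l=1}^{m}\epsilon_l f_l$ with $\operatorname{supp} f_l\subset\Gamma$ and differentiate $\Lambda^\Gamma_{\gamma_j}(f)$ in $\epsilon_1,\dots,\epsilon_m$ at $\epsilon=0$. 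The first order term gives $\Delta v_l=0$ with $v_l|_{\partial\Omega}=f_l$, which is the same for $j=1,2$. Higher order terms give linear Dirichlet problems for $\Delta w^{(m)}$, with sources built from $\nabla\cdot(\gamma^{(k)}\cdots\nabla v)$ and from lower order $w$'s. The equality of the partial DN maps, combined with the integration by parts identity against a harmonic $v_0$ with $v_0|_{\partial\Omega}$ supported in $\Gamma$, yields
\[
\int_\Omega \big(\gamma_1^{(k)}-\gamma_2^{(k)}\big)(x,\tau)\,[\nabla v_1,\dots,\nabla v_k]\,(\nabla v_{k+1}\cdot\nabla v_0)\,dx=0 .
\]

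The identity is obtained by induction on $k$. At each step we assume the lower order coefficients, including the $\tau$-dependence, already agree. That forces the lower order correctors $w$ to coincide, so they drop out. Lower order terms vanish on $\Gamma$ by the boundary agreement hypothesis and by the assumption that the data are supported on $\Gamma$. The $\tau$-dependence is handled by also adding a constant to the data, i.e. linearizing around the solutions $u\equiv\lambda$, which are exact solutions by (ii). Holomorphy in $\tau$ then recovers the full Taylor series in $\tau$.

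With $\gamma^{(k)}:=\gamma_1^{(k)}-\gamma_2^{(k)}$ at fixed $\lambda$, we must show that the displayed integral vanishing for all harmonic $v_0,\dots,v_{k+1}$ vanishing on $\partial\Omega\setminus\Gamma$ forces $\gamma^{(k)}=0$. Writing $\nabla v$ via $\partial_z v$ and $\partial_{\bar z}v$, a harmonic function is a sum of holomorphic and antiholomorphic parts. We insert CGO solutions
\[
v=e^{\tau\Phi}(a+r_\tau)+e^{\tau\bar\Phi}(\bar a+\tilde r_\tau)
\]
with $\Phi$ holomorphic having a nondegenerate critical point at a chosen $x_0$, $\operatorname{Im}\Phi=0$ on $\partial\Omega\setminus\Gamma$, and amplitudes chosen so that $v$ vanishes off $\Gamma$. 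Since gradients appear, we need bounds on $\nabla r_\tau$ of order $o(\tau)$ in appropriate norms rather than the standard $L^2$ bounds. For this we modify the phase, e.g. $\Phi_\kappa=\Phi+\kappa\,\psi$ with a convexifying term mimicking limiting Carleman weights, and prove Carleman estimates with gradient gain.

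Next we choose combinations of the phases $\pm\Phi$, $\pm\bar\Phi$ and scaled multiples across $v_0,\dots,v_{k+1}$ so that in the product only a term with phase $e^{i\tau\operatorname{Im}(c\Phi)}$ survives at leading order. Nonstationary phase kills the unwanted combinations, and stationary phase at $x_0$ recovers a specific component $\gamma^{(k)}(x_0)[\partial_z^{\alpha}\partial_{\bar z}^{\beta}]$. Varying the patterns of holomorphic and antiholomorphic factors recovers every component of the symmetric form, and varying $x_0$ over a dense set gives $\gamma^{(k)}\equiv0$. Boundary points, and the region where critical points cannot be placed, are handled by the density of admissible $x_0$ together with the infinite order agreement on the boundary.

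The main obstacle is this last step: obtaining remainder estimates strong enough for the products of $k+2$ gradients, and organizing the phase combinations so that the cross terms, which carry derivative losses $\tau^{k+2}$, are genuinely lower order.
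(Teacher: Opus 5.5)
\textbf{There is a genuine gap.} It sits in the step you postpone to the end, and it is not mainly about remainder estimates.

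\textbf{Phase design.} Suppose every solution carries a phase from $\{\pm c\Phi,\pm c\bar\Phi\}$. Then every exponent in the expanded product has the form $c\Phi+d\bar\Phi$. Since $\partial(c\Phi+d\bar\Phi)=c\Phi'$ and $\bar\partial(c\Phi+d\bar\Phi)=d\,\overline{\Phi'}$, all of these exponents are stationary at $x_0$, so nonstationary phase removes nothing. Worse, each factor $\nabla v_l$ brings $h^{-1}\nabla(c_l\Phi)$, which vanishes at $x_0$. So the principal part of every stationary-phase term vanishes at $x_0$, and it cannot be used to read off $\gamma^{(k)}(x_0)$.

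The missing idea, which is the core of the paper, is a second holomorphic phase with no critical points: $\lambda f$, with $f$ a conformal map sending $\Gamma_0$ into $\mathbb{R}$ and $\lambda$ outside the range of the relevant ratios $c\Phi'/f'$. The phases are then taken as, e.g., $\lambda f,\ 2\Phi-\lambda f,\ -2\bar\Phi$ for $m=1$, or $\lambda f,\ \Phi-\lambda f,\ \overline{\lambda f},\ -\bar\Phi-\overline{\lambda f}$ for one of the $m=2$ sets. This choice has the following properties.
\begin{itemize}
\item Each phase is real on $\Gamma_0$, and the phases sum to a multiple of $\Phi-\bar\Phi$. Conjugating one of them changes the sum by a purely imaginary amount, so every term in the expansion has a purely imaginary phase.
\item Only the terms with phase $0$ or a multiple of $\Phi-\bar\Phi$ are stationary; the others are killed by nonstationary phase.
\item The phase gradients entering the evaluation at $z_0$ are multiples of $\lambda f'(z_0)\neq0$.
\end{itemize}
When a factor with phase $-c\bar\Phi$ is differentiated, the paper writes $\nabla u\cdot\nabla v=\frac12(\Delta(uv)-u\Delta v-v\Delta u)$ and integrates by parts onto $T$. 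This is why the $m=1$ step yields $\bar\partial(T^1+iT^2)-\gamma_0^{-1}\bar\partial\gamma_0\,(T^1+iT^2)=0$, which is then closed off using the boundary vanishing of $T$. The top order in $h$ is non-oscillatory, so the evaluation term (one power of $h$ lower) is isolated with two sequences $h_j,\tilde h_j$ on which it takes opposite values. Different phase sets then give enough independent combinations, such as $T^{11}+2iT^{12}-T^{22}$ and $T^{11}+T^{22}$.

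This critical-point-free choice is also what the paper means by modifying the phase. It yields $\|r_1\|_{H^2}=O(h)$ and better $r_2$ bounds; it is not a convexified weight. In fact, under (ii) the linearized operator is $\Delta$. Then $2\Re(e^{\Phi/h}a)$, with $\Phi$ real and $a$ purely imaginary on $\Gamma_0$, is already exactly harmonic and zero on $\Gamma_0$. So the remainder problem you single out can be bypassed entirely; the phase design cannot.

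\textbf{The $\tau$-dependence.} Your treatment is not admissible with partial data. Linearizing around $u\equiv\lambda$ uses boundary data $\lambda+\sum_l\epsilon_l f_l$, which is not supported in $\Gamma$, so $\Lambda^\Gamma_\gamma$ gives no information about it. The paper's reduction instead linearizes at $u=0$ and expands $\gamma$ jointly in $(\tau,z)$. The $\tau$-derivatives then appear as the $0$-components of the tensor $T$ in Proposition 1.2, i.e.\ factors $u_l$ instead of $\nabla u_l$. Holomorphy then recovers $\gamma$ from its Taylor coefficients at the origin.

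Your identity contains only gradients, so it omits these mixed coefficients. It must also be symmetrized over which $v_l$ is paired with $\nabla v_0$; this matters once the solutions carry different phases. (Be aware that the paper removes the $0$-components by inserting $u_l\equiv1$, which runs into the same support restriction. That step needs an argument compatible with $\operatorname{supp}f\subset\Gamma$ in either approach.)
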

 The proof of Theorem 1.1  reduces to the following completeness result:
 
\begin{prop}\label{prop:main1}
Let $\Omega \subset \mathbb{R}^2$, be a bounded open domain with $C^{\infty}$ boundary. Let $\gamma_0 \in C^{\infty}(\bar{\Omega})$ and assume that $\gamma_0$ satisfies the assumption (i). Let $m \in \mathbb{N}$ and let $T$ be a $C^\infty$ smooth function with values in the space of symmetric tensors of rank $m$. Assume also that $T$ vanishes to infinite order on $\partial \Omega$. Suppose that 
\begin{multline}\label{prop1equ}
%\begin{array}{r}
\sum_{\left(l_1, \ldots, l_{m+1}\right) \in \pi(m+1)} \sum_{j_1, \ldots, j_m=0}^2 \int_{\Omega} T^{j_1 \ldots j_m}(x)\left(u_{l_1}, \nabla u_{l_1}\right)_{j_1} \ldots\left(u_{l_m}, \nabla u_{l_m}\right)_{j_m} \\
\times\nabla u_{l_{m+1}} \cdot \nabla u_{m+2} \s d x=0.
%\end{array}
\end{multline}
for all $u_l \in C^{\infty}(\bar{\Omega})$ solving $\nabla \cdot\left(\gamma_0 \nabla u_l\right)=0$ in $\Omega$, and $supp (u)|_{\partial \Omega}\subset \Gamma$, $ l=1, \ldots, m+2$. Then $T$ vanishes identically on $\Omega$.
Here $\left(u_l, \nabla u_l\right)_j, j=0,1, 2$ stands for the $j^{\textrm{th}}$ component of the vector $\left(u_l, \partial_{x_1} u_l, \ldots, \partial_{x_n} u_l\right)$, and $\pi(m+1)$ stands for the set of all distinct permutations of $\{1,\ldots, m +1\}$. 
\end{prop}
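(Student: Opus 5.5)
We argue by induction on the number of gradient factors, using partial-data CGO solutions of Imanuvilov--Uhlmann--Yamamoto type for $\nabla\cdot(\gamma_0\nabla u)=0$. First reduce the conductivity equation to a Schr\"odinger equation by $v=\gamma_0^{1/2}u$, i.e. $(\Delta-q)v=0$ with $q=\gamma_0^{-1/2}\Delta\gamma_0^{1/2}$. For this equation we use solutions of the form
\[
u=\gamma_0^{-1/2}\bigl(e^{\Phi/h}(a+a_1h)+e^{\overline{\Phi}/h}(\overline{a}+\overline{a_1}h)+r_h\bigr),
\]
where $\Phi$ is holomorphic with a nondegenerate critical point at a chosen interior point $x_0$, and $\operatorname{Im}\Phi=0$ on $\partial\Omega\setminus\Gamma$ (more precisely on the complement of a small open subset of $\Gamma$). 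The amplitude $a$ is holomorphic and vanishes at a prescribed finite set of points, and $r_h$ vanishes on $\partial\Omega\setminus\Gamma$, so $u|_{\partial\Omega}$ is supported in $\Gamma$.

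Derivatives of the $u_l$ enter \eqref{prop1equ}, so we need $\|r_h\|_{H^1}$, and ideally $\|\nabla r_h\|_{L^p}$, to be $o(1)$ relative to the leading term $h^{-1}|\partial\Phi|$. To get this we would modify the phase as in the limiting Carleman weight construction: replace $\Phi$ by $\Phi+ih\psi$, or add a convexifying term $\pm h\varepsilon|x|^2$. This yields a Carleman estimate with a gain of $h^{1/2+\epsilon}$, and then a refined remainder estimate $\|r_h\|_{H^1}\le Ch^{1/2+\epsilon}$. We expect this to be the main technical obstacle, together with handling the cross terms that have nonzero phase.

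Next, insert the solutions into \eqref{prop1equ}. Take $u_1,\dots,u_{m+1}$ and $u_{m+2}$ with phases $\pm\Phi/h$, choosing signs and conjugates so that only specific combinations survive. Each $\nabla u_l$ contributes, to leading order, $h^{-1}\partial\Phi\,a\,(1,i)^T$-type vectors, up to conjugation. Because $(1,i)\cdot(1,i)=0$, we pick the pattern so that the products $\nabla u_{l_{m+1}}\cdot\nabla u_{m+2}$ pair a holomorphic factor with an antiholomorphic one. The total phase is then $e^{(k\Phi+(N-k)\overline{\Phi})/h}$. Terms whose phase has a nonvanishing imaginary part only at the critical point are handled by stationary phase. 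Terms with a nonzero real linear combination, say $e^{2i\operatorname{Im}\Phi/h}$ without a critical point on the support, are handled by nonstationary phase. This uses that $T$ vanishes to infinite order on $\partial\Omega$, so that integration by parts produces no boundary terms. The surviving leading term at order $h^{-(m+1)}$ times $h$ (from stationary phase) is
\[
c\,\gamma_0(x_0)^{-\cdot}\,|\det\nabla^2\Phi(x_0)|^{-1/2}\,T^{j_1\dots j_m}(x_0)\,\zeta_{j_1}\cdots\zeta_{j_m}\prod a_l(x_0),
\]
with $\zeta=(0,1,i)$ or its conjugate, and further with some components replaced by the $0$-th entry $(u_l)$. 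The latter are lower order in $h$ and are peeled off by the induction.

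To recover all components of the symmetric tensor, we would vary which $u_l$ carry $\Phi$ and which carry $\overline\Phi$. This gives the contractions $T(\zeta^{\otimes k}\otimes\bar\zeta^{\otimes(m-k)})$ at $x_0$ for all $k$. These determine the purely gradient part $T^{j_1\dots j_m}$ with $j_i\ge1$, since the symmetric tensors on $\mathbb{C}^2$ are spanned by $\zeta^{\otimes k}\bar\zeta^{\otimes(m-k)}$.

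Then we proceed downward in the number of gradient indices. Once the components with more gradients vanish, the next-order terms in $h$ become leading. Amplitude vanishing, $a_l(x_0)=0$ with $\partial a_l(x_0)\neq0$, is used to isolate components involving $u_l$ itself, including the $j=0$ entries. Finally, $x_0$ ranges over a dense subset of $\Omega$ (critical points of admissible $\Phi$ are dense), and continuity gives $T\equiv0$.
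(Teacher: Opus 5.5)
There is a genuine gap. The stationary-phase step fails as stated.

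\textbf{Why the evaluation term vanishes.} Every $u_l$ is built on the same phase $\Phi$, up to sign and conjugation. So the leading part of each $\nabla u_l$ is $h^{-1}\partial\Phi\,e^{\Phi/h}a\,(1,i)$ or its conjugate. But $\partial\Phi(x_0)=0$ exactly at the point where stationary phase localizes. Your displayed ``surviving leading term'' silently drops the factor $\prod_l\partial\Phi(x_0)$ (or its conjugates). With that factor restored, the term is identically zero. The information about $T(x_0)$ is then pushed into lower-order terms of the expansion, where it is mixed with derivatives of $T$, derivatives of the amplitudes, the $a_1$ corrections and the remainders.

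\textbf{No room for nonstationary phase.} $\Phi'$, $\nabla\varphi$ and $\nabla\psi$ vanish at the same points. Hence every bounded-modulus oscillating combination has the form $e^{2ik\psi/h}$ and is stationary at the same $x_0$. Your nonstationary-phase step therefore has nothing to act on. You also do not address the zero-total-phase combinations. These always occur (take all holomorphic representatives), and they are of the same or higher order in $h$ than any evaluation term. Finally, when $m+2$ is odd, factors with phases $\pm\Phi,\pm\overline{\Phi}$ cannot have cancelling real parts. Indeed, your written total phase $e^{(k\Phi+(N-k)\overline{\Phi})/h}$ has modulus $e^{N\varphi/h}$.

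\textbf{The missing idea.} The paper uses auxiliary phases without critical points. Take $f$ a conformal map sending $\Gamma_0$ into $\mathbb{R}$, and $\lambda$ outside the ranges of the finitely many quotients $c\,\Phi'/f'$. Then use phases such as $\lambda f$, $\Phi-k\lambda f$, $\overline{\lambda f}$ and $-\overline{\Phi}-(j-1)\overline{\lambda f}$. This gives:
\begin{itemize}
\item the relevant individual gradients do not vanish at $z_0$;
\item the full sum of phases is $\Phi-\overline{\Phi}$, with a nondegenerate critical point at $z_0$;
\item one combination is $0$, and all others have no critical point, so nonstationary phase applies;
\item improved remainders: $\|r_1\|_{H^2}=O(h)$, and $r_2$ is controlled by interior Caccioppoli/elliptic estimates, which suffice because $T$ vanishes to infinite order at $\partial\Omega$.
\end{itemize}
This choice of phase, not a convexified weight, is the ``limiting Carleman weight'' analogy. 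Your bound $\|r_h\|_{H^1}\le Ch^{1/2+\epsilon}$ is asserted but not derived. The $h$-independent non-oscillating integrals at the evaluation order are then separated from the term containing $e^{2i\psi(z_0)/h}$ by choosing sequences $h_j\to0$ as in Guillarmou--Tzou.

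\textbf{The case $m=1$ and the $j=0$ components.} With three factors, one phase is forced to have its critical point at $z_0$; the paper uses $-2\overline{\Phi}$. Its gradient vanishes there, so even with auxiliary phases no pointwise evaluation of $T$ is available. The paper instead writes $\nabla u_{l_2}\cdot\nabla u_3=\frac12\bigl(\Delta(u_{l_2}u_3)-u_{l_2}\Delta u_3-u_3\Delta u_{l_2}\bigr)$ and integrates by parts onto $T^j\partial_ju_{l_1}$. This yields only the first-order equation $\bar\partial w-\gamma_0^{-1}(\bar\partial\gamma_0)w=0$ for $w=T^1+iT^2$, which is closed using $w|_{\partial\Omega}=0$. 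Likewise, the $j=0$ components are removed first, not peeled off last by amplitude vanishing as you sketch. This is done by reduction to lower $m$ and, for $T^0$, the linear partial-data density result applied to $\nabla\cdot\bigl(\gamma_0\nabla(T^0/\gamma_0)\bigr)$.
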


\subsection{Motivation and previous literature}

We briefly review earlier related works. For the linear conductivity equation, uniqueness for smooth conductivities in dimensions $n \geq 3$ was established by Sylvester and Uhlmann \cite{SyU} and Novikov \cite{Nov}, while in dimension two, Nachman \cite{Nach} proved uniqueness for $C^2$ conductivities. Subsequent works have relaxed these regularity assumptions: in dimensions $n \geq 3$, uniqueness holds for $W^{1,\infty}(\Omega)$ conductivities by Haberman and Tataru \cite{W1inf}, and in dimension two, for $L^\infty$ conductivities by Astala and P\"{a}iv\"{a}rinta \cite{AP}.

The Calderón inverse problem with partial boundary data has been studied in the linear setting. In dimensions $n\geq 3$, uniqueness from partial measurements was established in \cite{BU02} and \cite{KSU07}, while in dimension two it was proved by \cite{IUY10} and \cite{IUY11} using complex analytic methods. The case on manifold was discussed in \cite{GuillarmouTzou2011}. These results rely on the construction of special solutions vanishing on parts of the boundary and on density properties for products of solutions.

	% Following the foundational works \cite{KLU2018, FEIZMOHAMMADI20204683,LASSAS202144}, the literature on inverse problems for nonlinear equations based on the higher-order linearization method has grown rapidly. Earlier studies on inverse problems for quasilinear conductivity equations include  \cite{hervas2002inverse, egger2014simultaneous, munoz2020calderon,shankar2020recovering}.
	% More recently, works such as  \cite{LLLS2019partial,LLST2022inverse,KU2019partial,KU2019remark,feizmohammadi2023inverse, liimatainen2024uniqueness} have investigated inverse problems for semilinear elliptic equations with general nonlinearities, including cases with partial data.  

    For the quasilinear conductivity equation with $\gamma(x,u,\nabla u)$, uniqueness in dimensions $n \geq 3$ was established in \cite{CFKKU2021calderon}, while in dimension two it was proved in \cite{LW24} using a modification of the solutions in \cite{BU02}. In addition, inverse problems for the minimal surface equation have been studied in \cites{carstea2024inverse,nurminen1,nurminen2,minimal_surface_general} on Riemannian surfaces and in Euclidean domains.

The partial data problem for quasilinear conductivities is less developed. While higher-order linearization has proved effective for nonlinear inverse problems with full data, its application to partial-data settings introduces additional difficulties. In particular, one must combine higher-order linearization with the construction of solutions that vanish on inaccessible boundary portions. A uniqueness result for quasilinear conductivity equations with the gradient of the solution restricted to be a fixed direction was obtained in \cite{KKU}, where the authors use higher-order linearization together with density properties for products of gradients of harmonic functions vanishing on part of the boundary. The density result was proved from local to global following the idea in \cite{DKSU09}.

In our work, we will remove the restriction on the direction of $\nabla u$, by using the CGO solutions constructed by the Carleman estimate method as in \cite{IUY10} and \cite{GuillarmouTzou2011}, and higher-order linearization. 

\section{CGO Solutions}\label{section:CGO}
 Using the identity 
\begin{equation}\label{eq1}
    -\nabla \cdot \gamma \nabla\left(\gamma^{-1 / 2} u\right)=\gamma^{1 / 2}(-\Delta+q) u,
\end{equation}
where 
\[
q=\frac{\Delta \sqrt{\gamma}}{\sqrt{\gamma}},
\] 
we can obtain solutions to the conductivity equation from solutions to the Schrödinger equation $(-\Delta +V)u = 0$.

Let $\Gamma_0: = \partial \Omega \setminus \Gamma$ be the set where we do not know the Dirichlet-to-Neumann map. Let $\Phi =\varphi +i\psi$ be a holomorphic function that is purely real on $\Gamma_0$ and Morse on $\Omega$, with a critical point at $z_0\in \Omega$.

Consider the CGO solutions as described in \cite{IUY10} and \cite{GuillarmouTzou2011}. For completeness, we include a sketch of the construction and properties of these solutions. Let
\begin{align}\label{CGO_solution}
u &= \frac{1}{\sqrt{\gamma}} (e^{\Phi/h}(a + h a_0 + r_1)
      + \overline{e^{\Phi/h}(a + h a_0 + r_1)}
      + e^{\varphi/h} r_2)
\end{align}
for sufficiently small \(h>0\), where \(a\) is holomorphic, \(u\in C^k(\Omega)\) for some sufficiently large \(k\in\mathbb N\), and \(a_0\in H^2(\Omega)\) is holomorphic. We further assume that \(a(z_0)\neq 0\) and that \(a\) vanishes to high order at all other critical points \(z'\in \Omega\) of \(\Phi\). Moreover, \(a\) is chosen to be purely imaginary on \(\Gamma_0\). The remainder $r_1$ is constructed to satisfy 
\begin{align}\label{r1 construction}
e^{-\Phi/h}(\Delta+q)e^{\Phi/h}(a+r_{1})
=
O_{L^{2}}(h|\log h|),
\end{align} and has the form $r_1 = r_{11}+hr_{12}$.

Now consider $b: = \partial G(qa)-\omega = b(z)dz$, where $G$ is the Green operator of the Laplacian on the smooth domain $\Omega$
with boundary with Dirichlet condition, and $\omega$ is a smooth holomorphic $1$-form on $\Omega$.

We consider a local coordinates $z$ centered at a critical point $z'$ of $\Phi$
(i.e.\ $z'=\{z=0\}$ in this coordinate) , and construct $\omega$ such that at each critical point $z'$ of $\Phi$, we have
\begin{equation}\label{property of b}
\left|\partial_{\bar z}^{m}\partial_{z}^{\ell} b(z)\right|
= O\!\left(|z|^{2+\alpha-\ell-m}\right), \qquad \ell+m\le2,
\quad \text{if } z'\neq z_0,
\end{equation}
\[
|b(z)| = O(|z|), \qquad \text{if } z'=z_0 .
\]

Now, we let $\chi_1\in C_0^\infty(\Omega)$ be a cutoff function supported in a small neighbourhood $U_{z_0}$ of
the critical point $z_0$ and identically $1$ near $z_0$, and $\chi\in C_0^\infty(\Omega)$ is defined similarly
 with $\chi=1$ on the support of $\chi_1$.
%We will construct $r_1=r_{11}+hr_{12}$ in two steps: first,
% we will construct $r_{11}$ to solve equation (11) locally near the critical point $p$ of $\Phi$
% and then we will construct the global correction term $r_{12}$ away from $p$ by using the extra
% vanishing of $b$ in (12) at the other critical points.

We define locally in complex coordinates centered at $p$ and containing the support of $\chi$

\[
r_{11} := \chi e^{-2i\psi/h} R\big(e^{2i\psi/h}\chi_1 b\big)
\]
where
\[
Rf(z) := -(2\pi i)^{-1}\int_{\mathbb{R}^2}
\frac{1}{\bar z-\bar \xi}\,f\,d\bar\xi \wedge d\xi
\]
for $f\in L^\infty$ compactly supported. Note that $R$ is the classical Cauchy--Riemann operator
inverting $\partial_z$ locally. Extend $r_{11}$ by $0$ outside the neighbourhood of $z_0$.
The function $r_{11}$ is in $C^{3+\alpha}(\Omega)$ and we have
\begin{equation}
e^{-2i\psi/h}\partial\!\left(e^{2i\psi/h} r_{11}\right)
= \chi_1(-\partial G(qa)+\omega) + \eta
\end{equation}
with
\[
\eta := e^{-2i\psi/h}R(e^{2i\psi/h}\chi_1 b)\,\partial\chi .
\]

We then construct $r_{12}$ by observing that $b$ vanishes to order $2+\alpha$ at
critical points of $\Phi$ other than $z_0$, as in \eqref{property of b}, and $\partial\chi=0$
in a neighbourhood of any critical point of $\psi$, so we can find $r_{12}$ satisfying
\[
2i r_{12}\partial\psi = (1-\chi_1)b .
\]
Next, by Lemma 4.1.2 in \cite{GuillarmouTzou2011}, we have the following estimates.

\begin{lemma}\label{r_1: L2 est}
\[
\|\eta\|_{H^2} = O(|\log h|), \qquad
\|\eta\|_{H^1} \le O(h|\log h|), \qquad
\|r_1\|_{L^2} = O(h), \qquad
\|r_1 - h\tilde r_{12}\|_{L^2} = o(h).
\]
where $\widetilde{r}_{12}$ solves 
\[
2i \widetilde{r}_{12}\partial\psi = b .
\]
\end{lemma}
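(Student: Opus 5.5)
The plan is to prove the four bounds one after another. The only tools needed are the mapping properties of the Cauchy operator $R$ and stationary/non-stationary phase for the oscillatory factor $e^{2i\psi/h}$, where $\psi$ has a nondegenerate critical point at $z_0$. All four estimates are local. Away from the critical points of $\psi$ the phase is non-stationary, and near $z_0$ the phase is quadratic after a Morse change of coordinates.

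\textbf{Step 1: the estimates on $\eta$.} Since $\partial\chi$ vanishes near $z_0$, on $\operatorname{supp}\partial\chi$ we have $|\nabla\psi|\ge c>0$. Write
\[
R(e^{2i\psi/h}\chi_1 b)(z)=\int \frac{e^{2i\psi(\xi)/h}\chi_1(\xi)b(\xi)}{\bar z-\bar\xi}\,d\mu(\xi)
\]
and split the integral into $|\xi-z_0|<h^{1/2}$ and its complement with a smooth cutoff. On the small disc we use $|b(\xi)|=O(|\xi-z_0|)$. On the complement we integrate by parts once in $\xi$ using $e^{2i\psi/h}=\frac{h}{2i\partial_\xi\psi}\partial_\xi e^{2i\psi/h}$. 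The factor $1/|\xi-z_0|$ from $1/\partial\psi$ produces the logarithm $\int_{h^{1/2}}^1 r^{-1}\,dr\sim|\log h|$. This gives $\|R(e^{2i\psi/h}\chi_1 b)\|_{L^\infty(\operatorname{supp}\partial\chi)}=O(h|\log h|)$, where $z$ stays at positive distance from $z_0$, so the kernel is smooth in $z$ there.

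Each $z$-derivative of $e^{-2i\psi/h}R(\cdot)\partial\chi$ costs at most $h^{-1}$, which comes from differentiating $e^{-2i\psi/h}$; derivatives landing on the kernel are harmless. Hence $\|\eta\|_{H^1}=O(h|\log h|)$ and $\|\eta\|_{H^2}=O(|\log h|)$.

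\textbf{Step 2: $\|r_1\|_{L^2}=O(h)$.} The term $hr_{12}$ is $O(h)$ in $L^2$. Indeed, $r_{12}=(1-\chi_1)b/(2i\partial\psi)$ is bounded: near $z_0$ it vanishes, and near other critical points $z'$ the bound \eqref{property of b} gives $|b|=O(|z|^{2+\alpha})$ while $|\partial\psi|\sim|z|$.

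For $r_{11}=\chi e^{-2i\psi/h}R(e^{2i\psi/h}\chi_1 b)$ we use the same decomposition as in Step 1, now for $z$ near $z_0$. Integrating by parts in $\xi$ away from the $h^{1/2}$-disc around $z_0$ transfers $\partial_\xi$ onto $\chi_1 b/\partial\psi$ (bounded, since $b=O(|\xi|)$ and $\partial\psi\sim|\xi|$) and onto the Cauchy kernel. This yields
\[
R(e^{2i\psi/h}\chi_1 b)=\tfrac{h}{2i}\,e^{2i\psi/h}\frac{\chi_1 b}{\partial\psi}+h\,R\bigl(e^{2i\psi/h}\partial(\chi_1 b/\partial\psi)\bigr)+\text{(disc term)}.
\]
The disc term is $O(h^{1/2}\cdot h^{1/2})$ pointwise, which is $O(h)$ in $L^2$. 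In the middle term, $\partial(\chi_1 b/\partial\psi)=O(|\xi|^{-1})$, which lies in $L^p$ for $p<2$, so $R$ maps it boundedly into $L^2_{\mathrm{loc}}$. Hence $\|r_{11}\|_{L^2}=O(h)$.

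\textbf{Step 3: $\|r_1-h\tilde r_{12}\|_{L^2}=o(h)$.} The leading term above gives $r_{11}=h\chi_1 b/(2i\partial\psi)+h\,e^{-2i\psi/h}R(e^{2i\psi/h}g)+O_{L^2}(h)$-small pieces, with $g\in L^p$, $p<2$. Adding $hr_{12}=h(1-\chi_1)b/(2i\partial\psi)$ gives exactly $h\tilde r_{12}$. It therefore remains to show that
\[
\|R(e^{2i\psi/h}g)\|_{L^2}=o(1)\quad\text{and}\quad \text{disc term}=o(h)\ \text{in } L^2.
\]
For the first, approximate $g$ in $L^p$ by smooth functions compactly supported away from $z_0$. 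For a fixed such function, non-stationary phase makes the term $O(h)$; the approximation error is uniformly small because $R:L^p\to L^2_{\mathrm{loc}}$ is bounded. This is a Riemann--Lebesgue type argument. For the disc term, use a cutoff radius $\epsilon h^{1/2}$ with $\epsilon\to0$ slowly, or note that its $L^2$ norm is actually $O(h^{3/2}|\log h|)$ after computing more carefully.

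\textbf{Main obstacle.} The delicate point is this $o(h)$ statement: the singular weight $1/\partial\psi$ at $z_0$ is exactly borderline, so uniform integrability must come from the density argument rather than an explicit rate. I would carry this out following Lemma 4.1.2 of \cite{GuillarmouTzou2011}, checking that the weaker vanishing $|b|=O(|z|)$ at $z_0$ still suffices.
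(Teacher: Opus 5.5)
Your Steps 2 and 3 are sound. They are essentially the standard argument behind the result, which the paper does not prove but imports from Lemma 4.1.2 of \cite{GuillarmouTzou2011}. Three ingredients all work: the identity $e^{2i\psi/h}=\frac{h}{2i\partial\psi}\partial e^{2i\psi/h}$, the mapping property $R:L^p_{\mathrm{comp}}\to L^2_{\mathrm{loc}}$ for $p<2$, and the Riemann--Lebesgue density argument for the $o(h)$ claim. The $h^{1/2}$-disc is not even needed, because $\chi_1 b/\partial\psi\in W^{1,p}$ for $p<2$.

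The gap is in Step 1, where the bookkeeping loses a full power of $h$. You have $\|R(e^{2i\psi/h}\chi_1 b)\|_{L^\infty(\operatorname{supp}\partial\chi)}=O(h|\log h|)$ and a loss of $h^{-1}$ per derivative. That loss is real, since $\bar\partial\psi\neq 0$ on $\operatorname{supp}\partial\chi$. From these you only get $\|\eta\|_{L^2}=O(h|\log h|)$, $\|\eta\|_{H^1}=O(|\log h|)$ and $\|\eta\|_{H^2}=O(h^{-1}|\log h|)$, not the claimed bounds.

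This matters for the construction. Since $e^{-\Phi/h}\Delta e^{\Phi/h}=4\bar\partial\, e^{-2i\psi/h}\partial e^{2i\psi/h}$, the error in \eqref{r1 construction} contains $4\bar\partial\eta$. So $\|\eta\|_{H^1}=O(h|\log h|)$ is exactly the estimate that is needed.

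Moreover, one integration by parts followed by absolute values cannot do better than $O(h)$. The new amplitude $\partial(\chi_1 b\,(\bar z-\bar\xi)^{-1}/\partial\psi)$ is genuinely of size $|\xi-z_0|^{-1}$. That is integrable in the plane with no logarithm: the area element turns your $\int r^{-1}\,dr$ into $\int dr$.

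The missing idea is to use that the amplitude vanishes at the stationary point. For $z\in\operatorname{supp}\partial\chi$, set $F_z(\xi)=\chi_1(\xi)b(\xi)/(\bar z-\bar\xi)$. It is smooth in $(z,\xi)$ (here $b$ is smooth because $\gamma_0$ is), and $F_z(z_0)=0$. Hence the leading stationary-phase term $c\,h\,F_z(z_0)e^{2i\psi(z_0)/h}$ vanishes, and $R(e^{2i\psi/h}\chi_1 b)(z)=O(h^2)$ uniformly together with all $z$-derivatives.

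Concretely:
\begin{itemize}
\item $\partial\psi$ is holomorphic with a simple zero at $z_0$, since $\psi$ is harmonic and Morse.
\item By Hadamard's lemma, $F_z=(\partial\psi)G_1+(\bar\partial\psi)G_2$ with $G_1,G_2\in C_c^\infty$.
\item Using $e^{2i\psi/h}\partial\psi=\frac{h}{2i}\partial e^{2i\psi/h}$ and its conjugate, one integration by parts gives $-\frac{h}{2i}\int e^{2i\psi/h}(\partial G_1+\bar\partial G_2)$.
\item Stationary phase applied to this smooth amplitude supplies a further factor $h$.
\end{itemize}
This yields $\|\eta\|_{H^k}=O(h^{2-k})$ for $k=0,1,2$, which implies the stated estimates. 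The delicate point is therefore this extra factor of $h$ in the $\eta$ bounds, rather than the $o(h)$ statement you flagged as the main obstacle.
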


We also prove a $H^2$ estimate for $r_1$: 
\begin{lemma}\label{r1: H^2 est original}
$$||r_1||_{H^2(\Omega)} = o(\frac{1}{h}).$$
\end{lemma}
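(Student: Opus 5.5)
We estimate the two pieces $r_{11}$ and $h r_{12}$ separately, since $r_1=r_{11}+hr_{12}$. The aim is to show $\|r_{11}\|_{H^2}=o(h^{-1})$ and $h\|r_{12}\|_{H^2}=O(h)$. The second piece is the easy one. Away from the critical points, $r_{12}=(1-\chi_1)b/(2i\partial\psi)$ is smooth. Near a critical point $z'\neq z_0$, $\partial\psi$ vanishes to first order, because $\Phi$ is Morse. By \eqref{property of b}, $b$ and its derivatives of order $\ell+m\le 2$ satisfy $O(|z|^{2+\alpha-\ell-m})$. Hence, by the quotient rule, the derivatives of order $\le 2$ of $r_{12}$ are $O(|z|^{1+\alpha-2})=O(|z|^{\alpha-1})$. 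This is square integrable in two dimensions, so $r_{12}\in H^2(\Omega)$ with an $h$-independent bound, and $h\|r_{12}\|_{H^2}=O(h)=o(h^{-1})$.

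For $r_{11}=\chi e^{-2i\psi/h}R(e^{2i\psi/h}\chi_1 b)$ we need the structure of $R$: it inverts $\partial_z$, i.e.\ $\partial_z Rf=f$, and $\partial_{\bar z}R$ is a Beurling-type Calderón--Zygmund operator, bounded on $L^2$ and on $H^k$. Set $v=R(e^{2i\psi/h}\chi_1 b)$, so that $r_{11}=\chi e^{-2i\psi/h}v$. Each derivative falling on $e^{-2i\psi/h}$ produces a factor $h^{-1}\nabla\psi$, and each derivative on $v$ either gives back $e^{2i\psi/h}\chi_1 b$ (through $\partial_z$) or a singular integral of it (through $\partial_{\bar z}$). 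In both cases a further derivative again produces factors $h^{-1}\nabla\psi$. Expanding $\partial^\beta r_{11}$ with $|\beta|\le 2$ therefore gives three kinds of term:
\begin{itemize}
\item terms of order $h^{-2}|\nabla\psi|^2 v$;
\item terms of order $h^{-1}|\nabla\psi|\,(\text{first derivatives of }v)$;
\item $h^{-1}$ terms involving $\nabla^2\psi$, together with $O(1)$ terms.
\end{itemize}
Conjugating back, $e^{-2i\psi/h}\partial_z v=\chi_1 b$ carries no oscillation at all. So the worst contributions are $h^{-2}\|\,|\nabla\psi|^2 v\|_{L^2}$ and $h^{-1}\|\,|\nabla\psi|\,\partial_{\bar z} v\|_{L^2}$.

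The key input is the $L^2$ smallness of $v$ itself. Lemma \ref{r_1: L2 est} gives $\|r_1\|_{L^2}=O(h)$, and its proof (Lemma 4.1.2 of \cite{GuillarmouTzou2011}) is a stationary phase estimate at $z_0$ using $|b|=O(|z|)$. Concretely, we write $e^{2i\psi/h}=\frac{h}{2i\partial_{\bar z}\psi}\partial_{\bar z}e^{2i\psi/h}$ away from $z_0$ and integrate by parts in the kernel of $R$. Near $z_0$, on the ball $|z|\lesssim h^{1/2}$, we use $|b|\lesssim|z|$ directly. The weights $|\nabla\psi|\sim|z|$ near the nondegenerate critical point $z_0$ help here: we will show $\|\,|z|^k\nabla^j v\|_{L^2}\lesssim h^{(1+k-j)/2}|\log h|^{1/2}$ for the relevant pairs $(k,j)$. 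We do this by splitting into the regions $|z|<h^{1/2}$ and $|z|>h^{1/2}$, and gaining one power of $h$ per integration by parts in the outer region, where $|\nabla\psi|^{-1}\lesssim h^{-1/2}$. These bounds give $h^{-2}\cdot h^{3/2}|\log h|^{1/2}$ and $h^{-1}\cdot h^{1/2}|\log h|^{1/2}$, i.e.\ $O(h^{-1/2}|\log h|^{1/2})=o(h^{-1})$.

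Away from $z_0$, the cutoff region contributes through $\eta$, whose $H^2$ norm is $O(|\log h|)$ by Lemma \ref{r_1: L2 est}. The main obstacle is the weighted estimate for $v$ and $\partial_{\bar z}v$ near $z_0$, since $R$ is nonlocal and the weights do not commute with it. We will first try commuting $|z|^k$ with $R$: the commutator has a kernel $(|z|^k-|\xi|^k)/(\bar z-\bar\xi)$, which is bounded by $|z-\xi|^{0}$-type smoothing terms. If this fails, we fall back to the crude bound $\|r_{11}\|_{H^2}\lesssim h^{-2}\|v\|_{L^2}+h^{-1}\|v\|_{H^1}+O(1)$. Combined with $\|v\|_{L^2}=o(h)$ and $\|v\|_{H^1}=o(1)$, which come from the refined statement $\|r_1-h\tilde r_{12}\|_{L^2}=o(h)$ and its analogue one derivative up, this still yields $o(h^{-1})$.
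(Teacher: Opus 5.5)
Your treatment of $hr_{12}$ is fine. The estimate for $r_{11}$, however, has a genuine gap: the weighted bounds you aim for are false, and no argument that bounds the terms of $\partial^\beta r_{11}$ separately in absolute value can reach $o(h^{-1})$.

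Put $w:=e^{-2i\psi/h}v$. Then $r_{11}=\chi w$ and, in your convention, $\partial_z w=-\frac{2i}{h}(\partial_z\psi)w+\chi_1 b$. The refined statement you cite, $\|r_1-h\tilde r_{12}\|_{L^2}=o(h)$, says exactly that $r_{11}=h\chi_1 b/(2i\partial_z\psi)+o_{L^2}(h)$, up to the sign convention for $R$. Since $\chi_1 b$ lives on a fixed neighbourhood of $z_0$, not on the ball $|z|\lesssim h^{1/2}$, this forces the following (whenever $b\not\equiv 0$ there):
\begin{itemize}
\item $\||z|^2 v\|_{L^2}\gtrsim h$, rather than $h^{3/2}$;
\item $\|v\|_{L^2(\operatorname{supp}\chi_1)}\asymp h$, rather than $o(h)$;
\item $\|v\|_{H^1(\operatorname{supp}\chi_1)}\ge\|\partial_z v\|_{L^2(\operatorname{supp}\chi_1)}=\|\chi_1 b\|_{L^2}$, a fixed constant rather than $o(1)$.
\end{itemize}
In addition, the term $\|\nabla^2 v\|_{L^2}$ that your fallback drops is itself of order $h^{-1}$, because $\partial_z^2 v=\partial_z(e^{2i\psi/h}\chi_1 b)$. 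Concretely,
\[
\partial_z^2 w=\Big(\frac{2i}{h}\Big)^2(\partial_z\psi)^2 w-\frac{2i}{h}(\partial_z\psi)\chi_1 b-\frac{2i}{h}(\partial_z^2\psi)w+\partial_z(\chi_1 b).
\]
On $\{\chi=1\}$, where $r_{11}=w$, the first two terms both have $L^2$ norm $\frac{2}{h}\|(\partial_z\psi)\chi_1 b\|_{L^2}+o(h^{-1})$; only their difference is small. So your main route and your fallback both give at best $O(h^{-1})$.

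The missing idea is to remove the non-oscillatory leading part $h\chi_1 b/(2i\partial_z\psi)$ of $r_{11}$ before differentiating, so that these two terms cancel. The paper does this as follows:
\begin{itemize}
\item it subtracts that term from $r_{11}$ and gets $o(h)$ in $L^2$ for the difference from \cite{IUY10}, Prop.~2.7;
\item it argues from the form $e^{-2i\psi/h}Re^{2i\psi/h}\partial\bigl(\chi_1 b/(2\partial_z\psi)\bigr)$ that each derivative costs at most $h^{-1}$;
\item it absorbs the subtracted term into the $h$-independent $h\tilde r_{12}$.
\end{itemize}

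You can exploit the same cancellation without ever differentiating $\chi_1 b/\partial_z\psi$, which is worth avoiding. Near $z_0$ only $b(z_0)=0$ is available, while $\partial_{\bar z}b$ is a multiple of $qa$. So this quotient behaves like $\bar z/z$ there and in general is not in $H^1$. This is also why keeping the factor $1-\chi_1$ in your $r_{12}$ is the right call.

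Concretely, set $\rho:=w-h\chi_1 b/(2i\partial_z\psi)$ and expand $\partial_z^2(\chi w)$:
\begin{itemize}
\item The first two terms of the display combine into $(\frac{2i}{h})^2(\partial_z\psi)^2\rho$. Since $\chi\rho=r_1-h\tilde r_{12}=o_{L^2}(h)$, this is $o(h^{-1})$.
\item The third term is $O(h^{-1}\|r_{11}\|_{L^2})=O(1)$.
\item The terms in which derivatives hit $\chi$ live where $\chi_1=0$. There $\partial_z w=-\frac{2i}{h}(\partial_z\psi)w$, so these terms are $O(h^{-1}\|\eta\|_{H^1})=O(|\log h|)$.
\end{itemize}
Finally, $r_{11}$ is compactly supported, so Plancherel gives $\|\partial_j\partial_k r_{11}\|_{L^2}\le 4\|\partial_z^2 r_{11}\|_{L^2}$. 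Controlling $\partial_z^2 r_{11}$ therefore suffices, and neither the Beurling transform nor weighted estimates are needed.
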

\begin{proof}
Now we have
\begin{equation}
\|r_1\|_{H^2(\Omega)}
\le
\left\|
\chi e^{-2i\psi/h} R\!\left(e^{2i\psi/h}\chi_1 b\right)
-
h\,\frac{\chi_1 b}{2\partial_z\psi}
\right\|_{H^2(U_{z_0})}
+
h\|\tilde r_{12}\|_{H^2(\Omega)}
\end{equation}
By \cite{IUY10} Proposition 2.7, we have
\begin{align*}
&\left\|
\chi e^{-2i\psi/h} R\!\left(e^{2i\psi/h}\chi_1 b\right)
-
h\,\frac{\chi_1 b}{2\partial_z\psi}
\right\|_{L^2(U_{z_0})} \\ &= \left\|\chi \left(e^{-2i\psi/h} R\!\left(e^{2i\psi/h}\chi_1 b\right)
-
h\,\frac{\chi_1 b}{2\partial_z\psi}\right)
\right\|_{L^2(U_{z_0})} = o(h)
\end{align*}

By the form of 
$$
 e^{{-2i\psi}/h} R e^{2i\psi/h} {\partial}\left(\frac{\chi_1 b}{2{\partial_z} \psi}\right),
$$
we see that the same estimates hold with a loss of $h^{2}$
for any derivatives of order less than or equal to $2$. So we have

$$
\left\|
\chi e^{-2i\psi/h} R\!\left(e^{2i\psi/h}\chi_1 b\right)
-
h\,\frac{\chi_1 b}{2\partial_z\psi}
\right\|_{H^2(U_{z_0})}=o(h^{-1})
$$

Now since $\widetilde{r}_{12}$ is independent of $h$, we have
$$||r_1||_{H^2(\Omega)} = o(h^{-1}).$$
\end{proof}

From above, we have shown
 there exists
\[
r_{1}\in H^{2}(\Omega)
\]
such that
\[
\|r_{1}\|_{L^{2}}=O(h)
\]
and satisfies \eqref{r1 construction}.

In the next step, it can be shown that there exists a holomorphic function $a_0 \in H^2(\Omega)$ independent of $h$ such that the solution vanishes on $\Gamma_0$. The previous requirement that $\Phi$ is purely real on $\Gamma_0$, $a$ is purely imaginary on $\Gamma_0$ is used here.

Finally, the following result is shown by a Carleman estimate in \cite{GuillarmouTzou2011}.

\begin{lemma}\label{r_2: L2 est}
    There exists $r_2$, satisfying $||r_2||_{L^2} = O(h^{3/2}|\log h|)$, such that \eqref{CGO_solution} solves the Schr\"odinger equation.
\end{lemma}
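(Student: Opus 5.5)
The plan is to follow the Carleman-estimate construction of \cite{GuillarmouTzou2011} (Section 4 there), which the present section only sketches, and obtain $r_2$ as the solution of a conjugated inhomogeneous problem with zero boundary data on $\Gamma_0$. Write $w := e^{\Phi/h}(a+ha_0+r_1)+\overline{e^{\Phi/h}(a+ha_0+r_1)}$. By \eqref{eq1}, $u$ solves the conductivity equation exactly when $\sqrt{\gamma}\,u$ solves $(-\Delta+q)(\sqrt\gamma\, u)=0$. So we need $r_2$ with
\[
e^{-\varphi/h}(\Delta-q)\,e^{\varphi/h} r_2 = -e^{-\varphi/h}(\Delta-q)w =: F_h,
\qquad r_2|_{\Gamma_0}=0 .
\]
The boundary condition is natural because $w|_{\Gamma_0}=0$ by the choice of $a_0$ (using that $\Phi$ is real and $a$ is imaginary on $\Gamma_0$).

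The first step is to estimate $F_h$ in $L^2$. Since $|e^{\Phi/h}|=e^{\varphi/h}$, the identity \eqref{r1 construction} gives
\[
e^{-\varphi/h}(\Delta-q)e^{\Phi/h}(a+r_1) = e^{i\psi/h}\,O_{L^2}(h|\log h|).
\]
The conjugate term is handled the same way, since $q$ is real (or one works with the conjugate equation). For the $ha_0$ term, $a_0$ is holomorphic and $\Phi$ is holomorphic, so $\Delta(e^{\Phi/h}a_0)=4\partial\bar\partial(e^{\Phi/h}a_0)=0$. Only $-q e^{\Phi/h} h a_0$ remains, and it contributes $O_{L^2}(h)$. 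Hence $\|F_h\|_{L^2}=O(h|\log h|)$. I will check whether the $O(h)$ term from $a_0$ can be absorbed into $r_1$, or whether it is already $O(h|\log h|)$, which suffices.

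The key step is the solvability estimate. We need a Carleman estimate for the weight $\varphi$, which is harmonic and is a limiting Carleman weight in 2D, with boundary terms. For $v\in H^2\cap H^1_0$ it should read
\[
h\|v\|_{L^2}^2 + h^3\|\nabla v\|^2 \lesssim h^{2}\|e^{\varphi/h}(\Delta - q)e^{-\varphi/h}v\|_{L^2}^2 \ \text{(adjusted for boundary terms on } \Gamma)
\]
with a loss $h^{-1/2}$ from the degenerate critical points of $\varphi$. We also need the convexification trick of \cite{IUY10}, which is available because $\Phi$ is Morse, and control of the boundary term on $\Gamma$ through $\partial_\nu\varphi$. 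Here the realness of $\Phi$ on $\Gamma_0$ makes $\partial_\nu\psi=0$, hence $\partial_\nu\varphi$ is governed by tangential derivatives; this gives the correct sign structure on $\Gamma_0$. By Hahn--Banach and Riesz duality we then obtain $r_2$ with $r_2|_{\Gamma_0}=0$ and
\[
\|r_2\|_{L^2}\lesssim h^{1/2}\|F_h\|_{L^2}=O(h^{3/2}|\log h|).
\]

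I expect the main obstacle to be this Carleman estimate with the $h^{1/2}$ gain near the critical points and the boundary term on $\Gamma_0$; I intend to cite \cite{GuillarmouTzou2011} (Proposition 4.x) rather than reprove it. Finally, elliptic regularity gives $r_2\in H^2$, so $u$ solves the equation.
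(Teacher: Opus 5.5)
Your proposal follows the paper's route. The paper simply invokes the Carleman estimate of \cite{GuillarmouTzou2011} and the resulting duality solvability, $\|r_2\|_{L^2}\lesssim h^{1/2}\|F_h\|_{L^2}$ with $r_2|_{\Gamma_0}=0$, applied to $\|F_h\|_{L^2}=O(h|\log h|)$, exactly as you do.

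One correction to your heuristics, though it does not affect the cited argument: $\Phi$ real on $\Gamma_0$ means $\psi\equiv 0$ there, so it is the tangential derivative $\partial_\tau\psi$ that vanishes, not $\partial_\nu\psi$. Cauchy--Riemann then gives $\partial_\nu\varphi=0$ on $\Gamma_0$, and that is what makes the $\Gamma_0$ boundary term harmless. Also, the $h^{1/2}$ loss comes from the non-degenerate (Morse) critical points of $\varphi$, not degenerate ones. Finally, the duality solution only gets interior $H^2$ regularity, not global $H^2$, but interior regularity is all that is needed.
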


% For convenience, we include the Carleman estimate used here.

% \begin{prop}
% Let
% $\varphi:\Omega\to\mathbb{R}$ be a $C^k(\Omega)$ harmonic Morse function for
% $k$ large. Then for all $V\in L^\infty(\Omega)$ there exist constants
% $C>0$ and $h_0>0$ such that, for all $h\in(0,h_0)$ and
% $u\in C^\infty(\Omega)$ with $u|_{\partial\Omega}=0$, we have
% \begin{equation}
% \frac{1}{h}\|u\|_{L^2(\Omega)}^2
% +\frac{1}{h^2}\|u|d\varphi|\|_{L^2(\Omega)}^2
% +\|du\|_{L^2(\Omega)}^2
% +\|\partial_\nu u\|_{L^2(\Gamma_0)}^2
% \le
% C\left(
% \|e^{-\varphi/h}(\Delta_g+V)e^{\varphi/h}u\|_{L^2(\Omega)}^2
% +\frac{1}{h}\|\partial_\nu u\|_{L^2(\Gamma)}^2
% \right).
% \end{equation}
% where $\partial_\nu$ is the exterior unit normal vector field to
% $\partial\Omega$.
% \end{prop}

We will also give the $H^1_{loc}$ and $H^2_{loc}$ estimates for $r_2$ for later use. We note that the local estimates are sufficient for later use, since we have assumed boundary determination.

\begin{lemma}\label{r_2: H^2 loc est}
\[
\|r_2\|_{H^1_{loc}} = O(h^{1/2}|\log h|), \qquad
\|r_2\|_{H^2_{loc}} = O(h^{-1/2}|\log h|).
\]    
\end{lemma}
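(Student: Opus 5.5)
We derive the estimates from the equation that $r_2$ satisfies, using interior elliptic regularity together with the $L^2$ bound of Lemma~\ref{r_2: L2 est}. Since the full function in \eqref{CGO_solution} solves $(-\Delta+q)(\cdot)=0$, and the terms $e^{\Phi/h}(a+ha_0+r_1)$ and its conjugate are explicit, the remainder satisfies
\[
(\Delta+q)\big(e^{\varphi/h}r_2\big) = -(\Delta+q)\big(e^{\Phi/h}(a+ha_0+r_1)\big) - (\Delta+q)\big(\overline{e^{\Phi/h}(a+ha_0+r_1)}\big).
\]
Conjugating by $e^{\varphi/h}$, we write this as $P_h r_2 := e^{-\varphi/h}(\Delta+q)e^{\varphi/h} r_2 = F$. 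Because $\Phi$ is holomorphic, $a, a_0$ are holomorphic, and \eqref{r1 construction} holds, the source term is $F = e^{i\psi/h}(\text{error}) + \text{conj.}$, and each error is $O_{L^2}(h|\log h|)$. We expand
\[
P_h = \Delta + \frac{2}{h}\nabla\varphi\cdot\nabla + \frac{|\nabla\varphi|^2}{h^2} + q,
\]
using that $\Delta\varphi=0$.

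Fix $K\Subset K'\Subset\Omega$ and a cutoff $\theta\in C_0^\infty(K')$ with $\theta=1$ on $K$. Interior elliptic regularity for $\Delta$ gives
\[
\|\theta r_2\|_{H^2} \lesssim \|\Delta(\theta r_2)\|_{L^2} + \|\theta r_2\|_{L^2}.
\]
We then substitute
\[
\Delta r_2 = F - \frac{2}{h}\nabla\varphi\cdot\nabla r_2 - \frac{|\nabla\varphi|^2}{h^2} r_2 - q r_2.
\]
The dangerous terms are $h^{-1}\|\nabla r_2\|_{L^2(K')}$ and $h^{-2}\|r_2\|_{L^2}$. The latter gives only $h^{-1/2}|\log h|$, which is acceptable. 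So the problem reduces to proving the $H^1_{loc}$ bound $\|\nabla r_2\|_{L^2(K')}=O(h^{1/2}|\log h|)$: with it, $h^{-1}\|\nabla r_2\|_{L^2(K')}=O(h^{-1/2}|\log h|)$, and the commutator terms $[\Delta,\theta]r_2$, of first order with bounded coefficients, are also controlled. This yields $\|r_2\|_{H^2_{loc}}=O(h^{-1/2}|\log h|)$.

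For the $H^1_{loc}$ bound we do an energy (Caccioppoli-type) estimate. We multiply $P_h r_2=F$ by $\theta^2\bar r_2$, integrate by parts, and take real parts. The first-order term $\frac{2}{h}\int\theta^2\nabla\varphi\cdot\nabla r_2\,\bar r_2$ has real part
\[
\frac1h\int \theta^2\nabla\varphi\cdot\nabla|r_2|^2 = -\frac1h\int \nabla\cdot(\theta^2\nabla\varphi)\,|r_2|^2,
\]
and this is $O(h^{-1}\|r_2\|_{L^2}^2)=O(h^{2}|\log h|^2)$. The zeroth-order term contributes $h^{-2}\|r_2\|^2_{L^2}=O(h|\log h|^2)$. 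The cutoff terms are $\lesssim \|\theta\nabla r_2\|\,\|r_2\|$, and the source gives $\|F\|\,\|r_2\|=O(h^{5/2}|\log h|^2)$. Absorbing via Cauchy--Schwarz, we obtain
\[
\|\theta\nabla r_2\|_{L^2}^2 \lesssim h|\log h|^2,
\]
that is, $\|r_2\|_{H^1_{loc}}=O(h^{1/2}|\log h|)$. The Carleman estimate cited before Lemma~\ref{r_2: L2 est} also directly gives $\|dr_2\|_{L^2}\lesssim \|F\|+\dots$, which would be even stronger, but the local energy argument suffices and avoids boundary terms.

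The main obstacle is the precise bookkeeping of $F$: we must verify that the explicit part, including the term $h a_0$ and the conjugate, produces an $L^2$ error of size $O(h|\log h|)$ after conjugation by $e^{-\varphi/h}$. For this we use that $|e^{\pm i\psi/h}|=1$, that $\Delta(e^{\Phi/h}g)=4\partial_z\partial_{\bar z}$ kills the holomorphic parts, and the estimate \eqref{r1 construction}. We should also check that $q$, which is only $C^{\alpha}$-type from $\gamma\in C^{1,\alpha}$, is bounded; this is all the argument requires.
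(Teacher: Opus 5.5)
Your proposal is correct and follows essentially the paper's route. You first obtain the $H^1_{loc}$ bound from a Caccioppoli estimate: multiply by $\theta^2\bar r_2$, absorb the cutoff terms, and use $\|r_2\|_{L^2}=O(h^{3/2}|\log h|)$ and $\|F\|_{L^2}=O(h|\log h|)$. You then obtain the $H^2_{loc}$ bound from interior elliptic regularity for $\Delta$, controlling the $h^{-1}\nabla r_2$ and $h^{-2}r_2$ terms. Writing the real part of the drift term as an exact divergence is only a harmless refinement of the paper's Cauchy--Schwarz/Young step, since the $h^{-2}\|r_2\|_{L^2}^2$ term dominates either way.

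You should add the qualitative fact that $r_2\in H^1_{loc}$ (indeed $H^2_{loc}$) for each fixed $h$, which justifies your integration by parts. The paper gets this by bootstrapping elliptic regularity from $r_2\in L^2$. Also drop the aside about applying the Carleman estimate directly to $r_2$: $r_2$ comes from a duality argument and does not vanish on all of $\partial\Omega$, so that estimate does not apply to it.
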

\begin{proof}
First, we note that $r_2$ satisfies 
$$
e^{-\varphi/h}(\Delta+q)e^{\varphi/h}r_2 = -e^{-\varphi/h}(\Delta+q)\left(e^{\Phi/h}(a+r_1+ha_0) + \overline{e^{\Phi/h}(a+r_1+ha_0)}\right)=O_{L^2}(h\log h)
$$
We first show that for fixed $h>0$, $r_2 \in H^2_{loc}$. Denote 
\[
P_h:=e^{\varphi/h}(\Delta+q)e^{-\varphi/h}.
\]
Then by previous lemmas and the equation for $r_2$, we have
\[
r_2\in L^2(\Omega),
\qquad
P_h r_2=f:=-e^{-\varphi/h}(\Delta+q)\left(e^{\Phi/h}(a+r_1+ha_0) + \overline{e^{\Phi/h}(a+r_1+ha_0)}\right)\in L^2(\Omega).
\]
Consider a coordinate patch
$U\Subset \Omega$. Locally,
\[
P_h r_2
=
\Delta r_2
+\frac1h A(x)\cdot \nabla r_2
+\frac1{h^2}B(x)r_2
+\frac1h C(x)r_2
+q(x)r_2,
\]
where the coefficients are bounded functions on compact
subsets of $U$.

By the equation satisfied by $r_2$, we have in the sense of distributions
\[
\Delta r_2
=
f
-\frac1h A\cdot \nabla r_2
-\frac1{h^2}Br_2
-\frac1h Cr_2
-qr_2.
\]
Since we know $r_2\in L^2_{\mathrm{loc}}$, we have
$\nabla r_2\in H^{-1}_{\mathrm{loc}}$,
and hence
\[
A\cdot \nabla r_2\in H^{-1}_{\mathrm{loc}}.
\]
Also,
\[
Br_2,\ Cr_2,\ qr_2\in L^2_{\mathrm{loc}}
\subset H^{-1}_{\mathrm{loc}}.
\]
Thus
$
\Delta r_2\in H^{-1}_{\mathrm{loc}}.
$
By local elliptic regularity for $\Delta$, we obtain
\[
r_2\in H^1_{\mathrm{loc}}(U).
\]

Now return to the equation for $r_2$. Since $r_2\in H^1_{\mathrm{loc}}(U)$, we have
\[
A\cdot \nabla r_2\in L^2_{\mathrm{loc}}(U),
\]
and the zero-order terms still satisfy
\[
Br_2,\ Cr_2,\ qr_2\in L^2_{\mathrm{loc}}(U).
\]
Therefore
$
\Delta r_2
\in L^2_{\mathrm{loc}}(U).
$
Applying local elliptic regularity again, we get
\[
r_2\in H^2_{\mathrm{loc}}(U).
\]
Since $U\Subset \Omega$ was arbitrary, we conclude that
\[
r_2\in H^2_{\mathrm{loc}}(\Omega).
\]

Next, we establish a local Caccioppoli estimate and then bootstrap to local $H^1$ and $H^2$ estimates via elliptic regularity.

Now we have
\[
\|f\|_{L^2} = O(h|\log h|), \qquad \|r_2\|_{L^2} = O(h^{3/2}|\log h|).
\]
Fix nested open sets
\[
K \Subset U_0 \Subset U_1 \Subset \Omega.
\]

We begin with a Caccioppoli estimate on $U_0$. Let $\zeta \in C_c^\infty(U_1)$ with $\zeta \equiv 1$ on $U_0$. 

Multiplying $\Delta r_2$ by $\zeta^2 \overline{r_2}$, integrating over $U_1$, and integrating by parts gives
\[
\|\zeta \nabla r_2\|_{L^2}^2 \le C\|r_2\|_{L^2(U_1)}^2 + \left\|\int_{U_1} \zeta^2 (\Delta r_2)\overline{r_2}\right\|
\]
for some constant $C$.
Substituting the equation for $\Delta r_2$ and applying Cauchy–Schwarz to each term on the right yields
\[
\left\|\int_{U_1} \zeta^2 (\Delta r_2)\overline{r_2}\right\|
\le \|f\|_{L^2(U_1)}\|r_2\|_{L^2(U_1)}
+ \frac{C}{h}\|\zeta\nabla r_2\|_{L^2}\|r_2\|_{L^2(U_1)}
+ \frac{C}{h^2}\|r_2\|_{L^2(U_1)}^2.
\]
By Young's inequality,
\[
\frac{C}{h}\|\zeta\nabla r_2\|_{L^2}\|r_2\|_{L^2}
\le \frac12\|\zeta\nabla r_2\|_{L^2}^2 + \frac{C}{h^2}\|r_2\|_{L^2}^2
\]
for some constant $C$. The first term on the right absorbs into the left-hand side, giving
\[
\|\nabla r_2\|_{L^2(U_0)}^2
\le C\left( \frac1{h^2}\|r_2\|_{L^2(U_1)}^2 + \|f\|_{L^2(U_1)}\|r_2\|_{L^2(U_1)} \right),
\]
hence
\[
\|\nabla r_2\|_{L^2(U_0)}
\le C\left( \frac1h\|r_2\|_{L^2(U_1)} + \|f\|_{L^2(U_1)}^{1/2}\|r_2\|_{L^2(U_1)}^{1/2} \right).
\]
With the assumed bounds,
\[
\frac1h\|r_2\|_{L^2} = O(h^{1/2}|\log h|), \qquad
\|f\|_{L^2}^{1/2}\|r_2\|_{L^2}^{1/2} = O(h^{5/4}|\log h|),
\]
and the second term is subdominant. Therefore
\[
\|r_2\|_{H^1(U_0)} = O(h^{1/2}|\log h|).
\]

Since $K \Subset U_0$, interior elliptic regularity for the Laplacian gives
\[
\|r_2\|_{H^2(K)} \le C\left( \|\Delta r_2\|_{L^2(U_0)} + \|r_2\|_{L^2(U_0)} \right).
\]
The equation gives
\[
\|\Delta r_2\|_{L^2(U_0)} \le \|f\|_{L^2(U_0)} + \frac{C}{h}\|\nabla r_2\|_{L^2(U_0)} + \frac{C}{h^2}\|r_2\|_{L^2(U_0)}.
\]
Using $\|f\|_{L^2} = O(h|\log h|)$, $
\frac1h\|\nabla r_2\|_{L^2(U_0)} = O(h^{-1/2}|\log h|)
$, and $
\frac1{h^2}\|r_2\|_{L^2(U_0)} = O(h^{-1/2}|\log h|)
$, 
we obtain
\[
\|\Delta r_2\|_{L^2(U_0)} = O(h^{-1/2}|\log h|),
\]
and consequently
\[
\|r_2\|_{H^2(K)} = O(h^{-1/2}|\log h|).
\]

% \color{blue}
% For the $H^1$ estimate, we follow the same proof of Proposition 4.1 in \cite{GuillarmouTzou2011}, but in the solvability result Lemma 4.3.2, instead of the $L^2$ norm, consider the norm
% $$\|u\|_{H^1_h}: = \frac{1}{h}\|u\|_{L^2(\Omega)}+\|du\|_{L^2(\Omega)}.$$
\end{proof}

Next, we consider the remainder estimates for the case where $\Phi$ has no critical points.  

\begin{lemma}\label{r_1: H2 est new}
    If $\Phi$ has no critical points, then 
    $$
||r_{1}||_{H^2} = O(h).
$$
\end{lemma}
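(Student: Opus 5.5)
When $\Phi$ has no critical points, $\partial_z\psi$ never vanishes on $\overline{\Omega}$, so no local Cauchy-transform correction near a critical point is needed. I would therefore drop the cutoffs $\chi,\chi_1$ entirely: set $r_{11}=0$ and put all of the correction into the global, $h$-independent term. Concretely, I take $r_1 = h\tilde r_{12}$, where
\[
2i\,\tilde r_{12}\,\partial\psi = b, \qquad b=\partial G(qa)-\omega .
\]
Since $\partial\psi\neq 0$ on the compact set $\overline{\Omega}$, this equation can be solved by division, $\tilde r_{12}=b/(2i\,\partial_z\psi)$. Here $\omega$ may be taken to be $0$, or any fixed smooth holomorphic $1$-form, because no vanishing conditions at critical points are required.

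\textbf{Step 1 (the construction still works).} I check that this $r_1$ still satisfies \eqref{r1 construction}. Conjugating, $e^{-\Phi/h}(\Delta+q)e^{\Phi/h}(a+r_1)$ equals $q a + (\Delta+q) r_1$ plus the first-order terms $\frac{4}{h}\partial_z\Phi\,\partial_{\bar z}(a+r_1)$, up to the standard normalization. In the phase-conjugated form used above, $e^{-2i\psi/h}\bar\partial(e^{2i\psi/h} r_1)$ leaves the error $h\,\bar\partial\tilde r_{12}$ together with $(\Delta+q)h\tilde r_{12}$. Both are $O_{L^2}(h)$, because $\tilde r_{12}$ is independent of $h$. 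This is consistent with, and indeed better than, the $O(h|\log h|)$ bound available in the critical-point case, where the $|\log h|$ came only from the stationary-phase behaviour of $R$ near $z_0$.

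\textbf{Step 2 (regularity of $\tilde r_{12}$).} I show $\tilde r_{12}\in H^2(\Omega)$ with a norm independent of $h$. The function $\psi$ is smooth on $\overline{\Omega}$ and $|\partial_z\psi|\ge c>0$ there, so $1/\partial_z\psi\in C^\infty(\overline{\Omega})$. For the numerator, $G(qa)\in H^{4}$ (or $C^{3,\alpha}$ under the regularity assumed for $\gamma$) by elliptic regularity for the Dirichlet Laplacian on the smooth domain, since $qa\in H^2$. Hence $b=\partial G(qa)-\omega\in H^2(\Omega)$, and multiplying by the smooth function $1/(2i\,\partial_z\psi)$ keeps it in $H^2$. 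Therefore $\|r_1\|_{H^2}=h\|\tilde r_{12}\|_{H^2}=O(h)$. The same factor $h$ gives this bound uniformly for all derivatives up to order two, with no $h^{-2}$ loss: the loss in Lemma \ref{r1: H^2 est original} arose only from differentiating the oscillatory factor $e^{\pm 2i\psi/h}$ inside $r_{11}$, and that term is now absent.

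\textbf{Main obstacle.} The delicate point is the regularity of $q$. We have $q=\Delta\sqrt\gamma/\sqrt\gamma$, which is $H^2$ (so that $G(qa)\in H^4$) only if $\gamma$ has enough derivatives. Here $\gamma$ plays the role of $\gamma_0\in C^\infty(\overline{\Omega})$, so this is harmless. Otherwise I would use Schauder estimates, $qa\in C^{\alpha}\Rightarrow G(qa)\in C^{2,\alpha}$, and check that $H^2$ membership of $b$ still holds under the stated smoothness. I also need to verify that changing to $r_{11}=0$ does not disturb the subsequent choice of $a_0$ and $r_2$ (Lemma \ref{r_2: L2 est}). Both of these depend only on $\|r_1\|_{L^2}=O(h)$ and on the size of the error in \eqref{r1 construction}, and both are preserved.
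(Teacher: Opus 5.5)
Your argument is correct, but it follows a different route from the paper. You do not estimate the remainder built in Section 2. Instead, you use the absence of critical points to replace it by the purely algebraic one, $r_1 = h\tilde r_{12}$, where $\tilde r_{12}$ is the $h$-independent quotient of $b$ by $\partial_z\Phi = 2i\partial_z\psi$, with no Cauchy transform and no cutoffs. The bound then reduces to $b/\partial_z\Phi\in H^2(\Omega)$, which you correctly obtain from elliptic regularity for $G$ and $|\partial_z\Phi|\ge c>0$ on $\overline{\Omega}$.

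The paper keeps $r_1 = r_{11}+hr_{12}$ with $r_{11} = \chi e^{-2i\psi/h}R(e^{2i\psi/h}\chi_1 b)$. It iterates the non-stationary-phase identity for $e^{-2i\psi/h}R\,e^{2i\psi/h}$ to third order, which is why it needs $\chi_1 b\in C^3_0$. The leftover oscillatory term then carries $h^3$ and survives two derivatives, each costing $h^{-1}$, so $r_{11}-h\chi_1 b/(2\partial_z\psi) = O_{H^2}(h)$; adding $hr_{12}$ recombines into $h\tilde r_{12}$.

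The paper's route buys uniformity. The bound holds for the same remainder as in the critical-point case, so the construction identity, $\|r_1-h\tilde r_{12}\|_{L^2}=o(h)$, the choice of $a_0$ and the Carleman step for $r_2$ carry over unchanged. Your route is more elementary and sharper: $\eta=0$, and the residual is exactly $h(\Delta+q)\tilde r_{12}$, which gives the improved $r_2$ bounds at once. It is also all Section 3 needs. But because you change the object, those downstream properties must be re-checked.

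Two points in that re-check need correcting.

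In Step 1, compute directly. Since $a$ is holomorphic, $e^{-\Phi/h}(\Delta+q)e^{\Phi/h}(a+h\tilde r_{12}) = qa + 4\partial_z\Phi\,\partial_{\bar z}\tilde r_{12} + h(\Delta+q)\tilde r_{12}$. With $\partial_{\bar z}\omega=0$ and $4\partial_{\bar z}\partial_z G=\mathrm{Id}$, the middle term equals $-qa$ for $\tilde r_{12}=-b/\partial_z\Phi$. So the sign in $2i\tilde r_{12}\partial\psi=b$ must be matched to the sign convention for $G$. There is then no separate first-order error $h\bar\partial\tilde r_{12}$. The only residual is $h(\Delta+q)\tilde r_{12}=O_{L^2}(h)$, and this is where $\tilde r_{12}\in H^2$ is used.

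Second, the choice of $a_0$ does not depend only on $\|r_1\|_{L^2}$. It needs $r_1|_{\Gamma_0}$ to be $h$ times an $h$-independent function, so that $a_0$ can be taken independent of $h$. This holds for $h\tilde r_{12}$. In fact the trace coincides with that of the paper's remainder, because $r_{11}$ and $\chi_1$ vanish near $\partial\Omega$.
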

\begin{proof}
Note that in this case
$$
R e^{{2i\psi}/h} f=h\left[e^{2{i}\psi /h} \frac{f}{\bar{\partial} \psi}+ R \left(e^{2i\psi/h} \bar{\partial}\left(\frac{f}{\bar{\partial} \psi}\right)\right)\right]
$$
 holds for any $f \in C_0^1(\bar{\Omega})$.

Thus
$$
e^{{-2i\psi}/h}R e^{{2i\psi}/h} f=h\left[ \frac{f}{\bar{\partial} \psi}+ e^{{-2i\psi}/h} R \left(e^{2i\psi/h} \bar{\partial}\left(\frac{f}{\bar{\partial} \psi}\right)\right)\right]
$$

 If $f$ has better regularity ($C_0^2(\bar{\Omega})$ for example), and $\psi$ has no critical points, we can obtain the same identity for $R\left(e^{2i\psi/h} \bar{\partial}\left(\frac{f}{\bar{\partial} \psi}\right)\right)$:
 $$
R \left(e^{2i\psi/h} \bar{\partial}\left(\frac{f}{\bar{\partial} \psi}\right)\right) = 
h\left[e^{2i \psi/h} \frac{\bar{\partial}\left(\frac{f}{\bar{\partial} \psi}\right)}{\bar{\partial} \psi}+ R \left(e^{2i  \psi/h} \bar{\partial}\left(\frac{\bar{\partial}\left(\frac{f}{\bar{\partial} \psi}\right)}{\bar{\partial} \psi}\right)\right)\right].
 $$
Substituting back to the previous identity, we get 
$$
R e^{2i\psi/h} f=h\left(e^{2i \psi/h} \frac{f}{\bar{\partial} \psi}\right)+ h^2\left[e^{2i\psi/h} \frac{\bar{\partial}\left(\frac{f}{\bar{\partial} \psi}\right)}{\bar{\partial} \psi}+ R\left(e^{2i\psi/h} \bar{\partial}\left(\frac{\bar{\partial}\left(\frac{f}{\bar{\partial} \varphi}\right)}{\bar{\partial} \varphi}\right)\right)\right]
$$
Hence 
$$
e^{-2i\psi/h} R e^{2i\psi/h} f =h\left(\frac{f}{\bar{\partial} \psi}\right)+ h^2\left[\frac{\bar{\partial}\left(\frac{f}{\bar{\partial} \psi}\right)}{\bar{\partial} \psi}+ e^{-2i\psi/h}R\left(e^{2i \psi/h} \bar{\partial}\left(\frac{\bar{\partial}\left(\frac{f}{\bar{\partial} \psi}\right)}{\bar{\partial} \psi}\right)\right)\right]
$$
From here, we have
$$
||e^{-2i\psi/h} R e^{2i\psi/h} f -h\left(\frac{f}{\bar{\partial} \psi}\right)||_{H^1} = O(h)
$$
If we further assume $f\in C^3_0$, then similarly we have 
$$
||e^{-2i\psi/h} R e^{2i\psi/h} f -h\left(\frac{f}{\bar{\partial} \psi}\right)||_{H^2} = O(h)
$$
Applying this observation to 
$
\left\|
\chi e^{-2i\psi/h} R\!\left(e^{2i\psi/h}\chi_1 b\right)
-
h\,\frac{\chi_1 b}{2\partial_z\psi}
\right\|
$, we get
$$
\left\|
\chi e^{-2i\psi/h} R\!\left(e^{2i\psi/h}\chi_1 b\right)
-
h\,\frac{\chi_1 b}{2\partial_z\psi}
\right\|_{H^2(U_{z_0})} = O(h).
$$
Consequently, also using that $\widetilde{r}_{12}$ is independent of $h$, we have
$$
||r_{1}||_{H^2} = O(h).
$$
\end{proof}
Moreover, since
\[
\eta = e^{-2i\psi/h}R(e^{2i\psi/h}\chi_1 b)\,\partial\chi,
\]
if $\psi$ has no critical point, by the formula in the above proof,  we have 
$$
\|\eta\|_{L^2}=O(h).
$$
Hence, by the same proof as before, in this case we have the following better estimates for $r_2$:
\begin{lemma}\label{r_2:  est new}
\[
\|r_2\|_{L^2} = O(h^{3/2}), \qquad
\|r_2\|_{H^1_{loc}} = O(h^{1/2}), \qquad
\|r_2\|_{H^2_{loc}} = O(h^{-1/2}).
\]    
    
\end{lemma}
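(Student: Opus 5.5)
The plan is to repeat the argument of Lemma \ref{r_2: L2 est} and Lemma \ref{r_2: H^2 loc est} with every source of the $|\log h|$ loss removed, using the improved bounds available when $\Phi$ has no critical points. Lemma \ref{r_2: L2 est} is obtained by applying the Carleman estimate of \cite{GuillarmouTzou2011} to the equation
\[
e^{-\varphi/h}(\Delta+q)e^{\varphi/h}r_2=f:=-e^{-\varphi/h}(\Delta+q)\Big(e^{\Phi/h}(a+r_1+ha_0)+\overline{e^{\Phi/h}(a+r_1+ha_0)}\Big).
\]
This yields $\|r_2\|_{L^2}\le C h^{1/2}\|f\|_{L^2}$, together with the boundary terms handled by the choice of $a_0$. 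The $|\log h|$ in $\|r_2\|_{L^2}$ therefore comes only from $\|f\|_{L^2}=O(h|\log h|)$. The first step is thus to show $\|f\|_{L^2}=O(h)$ when $\Phi$ has no critical points.

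To do this, I will expand $e^{-\Phi/h}(\Delta+q)e^{\Phi/h}(a+r_1)=4\,e^{-\Phi/h}\partial\bar\partial(\cdots)+q(\cdots)$. The construction of $r_{11}$ and $r_{12}$ cancels the $qa$ term up to $\chi_1(-\partial G(qa)+\omega)$ corrections. What remains consists of terms of the form $\eta$ (and its $\bar\partial$-derivatives), $q r_1$, and $h$ times second derivatives of $r_{12}$.
\begin{itemize}
\item By Lemma \ref{r_1: H2 est new}, $\|r_1\|_{H^2}=O(h)$, so $qr_1$ and the $h\,\partial\bar\partial r_1$-type terms are $O(h)$ in $L^2$.
\item From the expansion formula in the proof of Lemma \ref{r_1: H2 est new}, $\|\eta\|_{L^2}=O(h)$, and similarly for its first derivatives after using the same integration by parts once more, since $f\in C^3_0$.
\end{itemize}
Hence the residual of \eqref{r1 construction} is $O_{L^2}(h)$ rather than $O_{L^2}(h|\log h|)$. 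The $ha_0$ term contributes $h(\Delta+q)$ acting on a holomorphic-weighted function, i.e.\ $O(h)$. Conjugating by $e^{i\psi/h}$ factors, which are unimodular, does not change $L^2$ norms, so $\|f\|_{L^2}=O(h)$ and then $\|r_2\|_{L^2}=O(h^{3/2})$.

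The local estimates then follow verbatim from the proof of Lemma \ref{r_2: H^2 loc est}. The Caccioppoli step gives
\[
\|\nabla r_2\|_{L^2(U_0)}\le C\big(h^{-1}\|r_2\|_{L^2}+\|f\|_{L^2}^{1/2}\|r_2\|_{L^2}^{1/2}\big)=O(h^{1/2})+O(h^{5/4}),
\]
so $\|r_2\|_{H^1_{loc}}=O(h^{1/2})$. Interior elliptic regularity combined with
\[
\|\Delta r_2\|_{L^2(U_0)}\le\|f\|+Ch^{-1}\|\nabla r_2\|+Ch^{-2}\|r_2\|=O(h^{-1/2})
\]
gives $\|r_2\|_{H^2_{loc}}=O(h^{-1/2})$.

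The main obstacle is checking carefully that no term in $f$ still carries a logarithm. In particular, the $H^1$ bound $O(h|\log h|)$ on $\eta$ from Lemma \ref{r_1: L2 est} must be replaced by an $O(h)$ bound on whatever derivatives of $\eta$ actually enter $f$. I would handle this by applying the two-step integration-by-parts identity from the proof of Lemma \ref{r_1: H2 est new} directly to $\chi_1 b$, which is smooth and compactly supported away from any critical point. This bounds $R(e^{2i\psi/h}\chi_1 b)$ and its derivatives up to order one by $O(h)$ and $O(1)$ respectively, with no logarithmic factor.
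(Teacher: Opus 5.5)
Your proposal is correct and follows essentially the paper's route: remove the $|\log h|$ from the residual $f$ using the no-critical-point integration-by-parts expansion from the proof of Lemma~\ref{r_1: H2 est new}, feed $\|f\|_{L^2}=O(h)$ into the same Carleman solvability bound $\|r_2\|_{L^2}\le Ch^{1/2}\|f\|_{L^2}$, and rerun the Caccioppoli and interior elliptic regularity steps of Lemma~\ref{r_2: H^2 loc est} unchanged. You are slightly more careful than the paper, which only states $\|\eta\|_{L^2}=O(h)$ even though a first derivative of $\eta$ is what enters the residual; note, though, that the bound $\|\eta\|_{H^1}=O(h)$ you need comes from the second-order expansion (its local terms are supported in $\operatorname{supp}\chi_1$ and so vanish on $\operatorname{supp}\partial\chi$), not from the $O(h)$ and $O(1)$ bounds in your last sentence, which by themselves only give $\|\nabla\eta\|_{L^2}=O(1)$ because differentiating $e^{-2i\psi/h}$ costs a factor $h^{-1}$.
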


\section{Proof of Proposition: the case m=1}
Without loss of generality, we assume $\Omega$ is an open subset of the unit disk and $z_0=0$ is an interior point. Let $\Gamma_0 := \partial \Omega \setminus \Gamma$.

By assumption,  the entries $T^{j_1\cdots j_m}$ in the proposition vanish to infinite order on the boundary.
 For $m=1$, the integral identity \eqref{prop1equ} reads
\begin{align*}
0 & =\sum_{\left(l_1, l_2\right) \in \pi(2)} \sum_{j=0}^n \int T^j(x)\left(u_{l_1}, \nabla u_{l_1}\right)_j \nabla u_{l_2} \cdot \nabla u_3 \\
&=\sum_{j=0}^n \int T^j(x)\left(u_{1}, \nabla u_{1}\right)_j \nabla u_{2} \cdot \nabla u_3+\sum_{j=0}^n \int T^j(x)\left(u_{2}, \nabla u_{2}\right)_j \nabla u_{1} \cdot \nabla u_3,
\end{align*}

By setting $u_2=1$ in the integral identity, the terms involving $\nabla u_2$ all vanish and so   we have
\begin{equation}\label{s3e1}
    \int  T^0(x) \nabla u \cdot \nabla v  = 0 
\end{equation}
for any functions  $u$ and $v$ solving
\begin{equation}\label{cond_equ}
    \nabla \cdot\left(\gamma_0 \nabla u\right)=0 \text { in } \Omega .
\end{equation}
We rewrite \eqref{s3e1} as
$$
\int \left(\frac{T^0}{\gamma_0}\right) \gamma_0 \nabla u \cdot \nabla v = 0
$$
We integrate by parts to move the gradient on $v$ to the other terms. By using also \eqref{cond_equ}, we get
\begin{align}\label{ibps3e1}
     \int \gamma_0 v \nabla \left(\frac{T^0}{\gamma_0}\right)\cdot \nabla u =0.
\end{align}
There is no boundary term since $T^0$ vanishes on the boundary. 
%Again using \eqref{cond_equ}, integration by parts gives \f{More details here would be nice. There was also inner product sign missing from somewhere.}
% \begin{align*}
% \int \gamma_0 \left[v \nabla \left(\frac{T^0}{\gamma_0}\right)+(\nabla v) \frac{T^0}{\gamma_0}\right]\cdot \nabla u  & = \int \gamma_0 \nabla  \left[v  \left(\frac{T^0}{\gamma_0}\right)\right]\cdot \nabla u\\
% & = \int v  \left(\frac{T^0}{\gamma_0}\right) \nabla \cdot (\gamma_0 \nabla u)
% = 0.
% \end{align*}
%There is no boundary term since we are now assuming $T^{\s 0}\in C_c^\infty(\Omega)$. 
By integrating by parts again, we obtain 
\begin{align}\label{s3uvprod}
  0 =  \int \gamma_0 v \nabla \left(\frac{T^0}{\gamma_0}\right) \cdot \nabla u 
    = \int uv \nabla \cdot \left(\gamma_0 \nabla\left(\frac{T^0}{\gamma_0}\right)\right) + 
    \int \gamma_0 u \nabla \left(\frac{T^0}{\gamma_0}\right) \cdot \nabla v \nonumber \\
    = \int uv \nabla \cdot \left(\gamma_0 \nabla\left(\frac{T^0}{\gamma_0}\right)\right).
\end{align}
Here in the last identity we used \eqref{ibps3e1} with $u$ in place of $v$. 

Let us denote 
\[
A:=\nabla \cdot \left(\gamma_0 \nabla\left(\frac{T^0}{\gamma_0}\right)\right),
\]
and let $u$ and $v$ be the CGO solutions 
\begin{align*}
u &= \frac{1}{\sqrt{\gamma_0}} (e^{\Phi/h}(a + h a_0 + r_1)
      + \overline{e^{\Phi/h}(a + h a_0 + r_1)}
      + e^{\varphi/h} r_2) \\
v &= \frac{1}{\sqrt{\gamma_0}}(e^{-\Phi/h}(a + h b_0 + s_1)
      + \overline{e^{-\Phi/h}(a + h b_0 + s_1)}
      + e^{-\varphi/h} s_2)
\end{align*}
where $\Phi =\varphi +i\psi$ is a holomorphic function that is purely real on $\Gamma_0$ and Morse on $\Omega$, with a critical point at $z_0\in \Omega$. We have by proof of Proposition 5.1 in \cite{GuillarmouTzou2011} that
 $A(z_0)\gamma_0^{-1}(z_0)=0$. This shows that $A=0$ at $z=z_0$. Since point $z_0$ is chosen arbitrarily, we can vary the critical point of the phases of the CGOs to show that $A=0$ in $\Omega$, 
$$
\nabla \cdot \left(\gamma_0 \nabla\left(\frac{T^0}{\gamma_0}\right)\right) = 0\quad \text{in $\Omega$}.
$$
Thus $T^0/\gamma_0$ is a solution to an elliptic equation in $\Omega$.  
Since $T^0$ is identically zero on the boundary by assumption, uniqueness of solutions to the Dirichlet problem of the above equation shows that 
\[
T^0=0 \text{ in } \Omega.
\]

By using that $T^0\equiv 0$, the integral identity \eqref{prop1equ} we started from now reduces to 
\begin{equation}\label{eq3}
    \sum_{\left(l_1, l_2\right) \in \pi(2)} \sum_{j=1}^2 \int T^j(x) \partial_{x_j} u_{l_1} \nabla u_{l_2} \cdot \nabla u_3 d x = 0,
\end{equation}
in the current case $m=1$ holding  for all $u_l\in {C}^{\infty}(\overline{\Omega})$, $l=1,2,3$, solving \ref{cond_equ}.

Now let $\Theta_1 := \Psi + \Phi$, $\Theta_2 = -\Psi + \Phi$ and 
$\Theta_3 = -2\bar{\Phi}$, where $\Phi$ and $\Psi$ are holomorphic, $\Phi$ is Morse with a critical point at $z_0$,  $\Theta_1$ and $\Theta_2$ do not have critical points in $\Omega$. In addition, we ask $\Theta_i$ are all purely real on $\Gamma_0$, and $\Phi(z_0)\neq 0$. It is shown in \cite{GuillarmouTzou2011} that such $\Phi$ exists, so that $\Theta_3$ sastisfying the above requirement exists.

For $\Theta_1$ an $\Theta_2$,  this can be achieved by
choosing proper $\lambda\in \C$ and $f$ holomorphic,
\[
\Theta_1 = \lambda f, 
\qquad 
\Theta_2 = 2\Phi - \lambda f
% \qquad\text{and}\qquad
% \Theta_3 = -2\bar{\Phi},
\]
We first let $f$ be a conformal map that maps $\Gamma_0$ to part of the real line, so $f'\neq 0$ on $\Omega$, and $\Theta_1$ has no critical point on $\Omega$ and is real on $\Gamma_0$. Now since $f'\neq 0$ on $\Omega$, the ratio  
$$
r(z) = \frac{2\Phi'(z)}{f'(z)}
$$
is holomorphic on $\Omega$,
and its image is some bounded subset of $\C$. Let $\lambda$ be chosen outside the image of $r$, then $\Theta_2'=2\Phi' - \lambda f'$ is nonzero on $\Omega$. Also, $\Theta_2$ is real on $\Gamma_0$ since $f$ and $\Phi$ are. 

Let $a$ be holomorphic, $a(z_0) \neq 0$, and $a$ vanishes to high order at other possible critical points of $\Phi$. We continue by letting $u_l$, $l=1,2,3$, to be solutions as following 
\begin{align*}
u_1 &= \frac{1}{\sqrt{\gamma_0}}(e^{\Theta_1/h}(a + h a_0 + r_1)
      + \overline{e^{\Theta_1/h}(a + h a_0 + r_1)}
      + e^{\varphi_1/h} r_2) \\
u_2 &= \frac{1}{\sqrt{\gamma_0}}(e^{\Theta_2/h}(a + h b_0 + s_1)
      + \overline{e^{\Theta_2/h}(a + h b_0 + s_1)}
      + e^{\varphi_2/h} s_2)\\
u_3 &=\frac{1}{\sqrt{\gamma_0}}( e^{\Theta_3/h} \overline{({a} + h c_0 + t_1)}
      + \overline{e^{\Theta_3/h}}({a} + h c_0 + t_1)
      + e^{\varphi_3/h} t_2)
\end{align*}
Notice that we can rewrite these solutions as
\begin{align}\label{rewritesolu}
   {u_1}  =\frac{1}{\sqrt{\gamma_0}}v_1, \quad
   {u_2}  =\frac{1}{\sqrt{\gamma_0}}v_2, \quad
   {u_3}  =\frac{1}{\sqrt{\gamma_0}}v_3,
\end{align}
where the functions $v_l$ solve
$$
\Delta v +qv =0
$$
with $q = \Delta \gamma_0/\sqrt{\gamma_0}$.

Before we proceed, let us give some estimates for correction terms in the above notation by lemmas in section \ref{section:CGO}. By Lemma \ref{r_1: L2 est} and Lemma \ref{r_2: L2 est}, we have $||t_1||_{L^2} = O(h)$, $||t_2||_{L^2}=O(h^{\frac{3}{2}}\log h)$. Moreover, we have by Lemma \ref{r1: H^2 est original} and Lemma \ref{r_2:  est new} that $||t_1||_{H^2} = o(\frac{1}{h})$, $||t_2||_{H^1_{loc}} = O(h^{\frac{1}{2}}\log h)$.

Also, by Lemma \ref{r_1: H2 est new} we have for $r_1$ and $s_1$,
$$||r_1||_{H^2}, ||s_1||_{H^2} = O(h).$$

% \begin{align*}
%     r_1  &= r_{11}+hr_{12} \\
%     r_{11}:&= \chi e^{-2i\psi /h }\bar{\partial}^{-1}(e^{2i\psi/h}\chi_1 b)
% \end{align*}
% where $r_{12}$ is independent of $h$.
% Then since 
% $$
% \bar{\partial}^{-1} e^{\mathrm{i} \varphi/h} f=\frac{ih}{2}\left[e^{\mathrm{i} \varphi/h} \frac{f}{\bar{\partial} \varphi}+ \frac{1}{2}\bar{\partial}^{-1} \left(e^{\mathrm{i}  \varphi/h} \bar{\partial}\left(\frac{f}{\bar{\partial} \varphi}\right)\right)\right],
% $$
%  holds for any $f \in C_0^1(\bar{\Omega})$,
% $||r_1||_{H^1}, ||s_1||_{H^1} = O(h)$.
% We also have 

Moreover, by Lemma \ref{r_2:  est new}, we have for correction terms $r_2, s_2$ 
 $$||r_2||_{L^2}, ||s_2||_{L^2}=O(h^{\frac{3}{2}}),$$ and 
%Additionally, since $r_2$ ($s_2$) solves
% $$e^{\varphi/h}(\Delta + q)e^{-\varphi/h}v = \eta=O(h),$$
% for $
% \eta := e^{-2 i \psi / h}\, R\!\left(e^{2 i \psi / h}\,\chi_1 b\right)\,\partial \chi $, 
% by elliptic regularity we have
$$
||r_2||_{H^1_{loc}} = O(h^{\frac{1}{2}}), ||r_2||_{H^2_{loc}} = O(h^{-\frac{1}{2}}), 
$$
for any fixed $h>0$.

Now since
$$
\nabla u_{l_2} \cdot \nabla u_3 = \frac{1}{2}(\Delta (u_{l_2}u_3) - u_{l_2}\Delta u_3 - (\Delta u_{l_2})u_3),
$$
we can rewrite \eqref{eq3} as
$$
 \sum_{\left(l_1, l_2\right) \in \pi(2)} \sum_{j=1}^2 \int (T^j \partial_{x_j} u_{l_1}) (\Delta (u_{l_2}u_3) - u_{l_2}\Delta u_3 - (\Delta u_{l_2})u_3)=0
$$
By using \eqref{rewritesolu}, we also have
\begin{align}\label{Deltaid}
\Delta u_l = \Delta\left(\frac{1}{\sqrt{\gamma_0}}\right)v_l + 2\nabla\left(\frac{1}{\sqrt{\gamma_0}}\right)\cdot \nabla v_l - \frac{qv_l}{\sqrt{\gamma_0}}.
\end{align}
Let us write $\Delta=4\partial \bar{\partial}$ and integrate by parts to obtain 
\begin{align}\label{eq:second integral}
  0 &= \sum_{\left(l_1, l_2\right) \in \pi(2)} \sum_{j=1}^2 \int (T^j \partial_{x_j} u_{l_1}) (\Delta (u_{l_2}u_3) - u_{l_2}\Delta u_3 - (\Delta u_{l_2})u_3) \nonumber \\ 
  &= \sum_{\left(l_1, l_2\right) \in \pi(2)} \sum_{j=1}^2 \int \Bigg\{ 4\partial\bar{\partial}(T^j \partial_{x_j} u_{l_1}) (u_{l_2}u_3) 
  \\ 
  & \qquad \qquad-(T^j \partial_{x_j} u_{l_1})(2u_{l_2}\nabla(\frac{1}{\sqrt{\gamma_0}})\cdot \nabla v_3 + 2u_{3}\nabla(\frac{1}{\sqrt{\gamma_0}})\cdot \nabla v_{l_2}) \nonumber \\ 
   &\qquad\qquad\qquad\qquad\qquad\qquad\qquad\qquad\qquad+  (T^j \partial_{x_j} u_{l_1})F_{T,\gamma_0}(u_{l_2}v_3+v_{l_2}u_3)\Bigg\}, \nonumber
\end{align}
where $F_{T,\gamma_0}$ is a general term that depends smoothly only on $T^l$ and $\gamma_0$, and their derivatives, and later $F_{T,\gamma_0}$ may vary from line to line.

Now let $A=a+ha_0+r_1,B=a+hb_0+s_1, {C}={a+hc_0+t_1}$, we first compute
\begin{align}\label{v1v2v3}
    v_1v_2v_3 & =  \nonumber 2\Re(e^{\frac{2(\Phi - \bar{\Phi})}{h}}AB\bar{C}+ AB{C})+ 2\Re (e^{\frac{
\lambda f - \overline{{\lambda f}} }{h}}A\bar{B}\bar{C} + e^{\frac{{
\lambda f - \overline{\lambda f}} - 2{\Phi}+2\overline{\Phi} }{h}}A\bar{B}{C}) \\ \nonumber
    & + (e^{\varphi_1/h} r_2) \Re(e^{(\Theta_2+\Theta_3)/h}B\bar{C}+e^{(\Theta_2+\overline\Theta_3)/h}B{C})+ (e^{\varphi_2/h} s_2) \Re(e^{(\Theta_1+\Theta_3)/h}A\bar{C}+e^{(\Theta_1+\overline\Theta_3)/h}A{C})\\& \nonumber + (e^{\varphi_3/h} t_2) \Re(e^{(\Theta_1+\Theta_2)/h}A{B}+e^{(\Theta_1+\overline\Theta_2)/h}A\bar{B}) \\ 
    & + e^{\varphi_1 + \varphi_2 + \varphi_3} r_2 s_2 t_2 
\end{align}
and 
\[
\begin{aligned}
AB\overline C
&=a^2\bar a
+a^2\overline{t_1}
+a\bar a(r_1+s_1)
+h\Big(
a^2\overline{c_0}
+a\bar a(a_0+b_0)\Big)\\
&\quad + h\Big(a(a_0+b_0)\overline{t_1}
+(a_0s_1+b_0r_1)\bar a
+(a_0s_1+b_0r_1)\overline{t_1}
+a(r_1+s_1)\overline{c_0}
+r_1s_1\overline{c_0}
\Big) \\[2mm]
&\quad
+h^2\Big(
a_0b_0\bar a
+a_0b_0\overline{t_1}
+a(a_0+b_0)\overline{c_0}
+(a_0s_1+b_0r_1)\overline{c_0}
\Big) \\[2mm]
&\quad
+h^3 a_0b_0\overline{c_0}+a(r_1+s_1)\overline{t_1}
+r_1s_1\bar a
+r_1s_1\overline{t_1}\\
&\quad = a^2\bar a
+a^2\overline{t_1}
+a\bar a(r_1+s_1)
+h\Big(
a^2\overline{c_0}
+a\bar a(a_0+b_0)\Big) +o(h)
\end{aligned}
\]
Similarly,
\begin{align*}
ABC
= & a^3  + \big[\,h\,a^2(a_0 + b_0 + c_0) + a^2(r_1 + s_1 + t_1)\big]+ o(h)
\end{align*}
where we used the estimate on the correction terms, as in Lemma \ref{r_1: L2 est}.
Moreover,
% \begin{align*}
% (\partial A)BC
% & = a^2\partial a + h[a(\partial a)(b_0+c_0)
% +a^2\partial a_0]+a(\partial a)(s_1+t_1)
% +a^2\partial r_1+o(h)
% \end{align*}
\begin{align*}
(\partial A)BC
& = a^2\partial a + h[a(\partial a)(b_0+c_0)
+a^2\partial a_0]+a(\partial a)(s_1+t_1)
+a^2\partial r_1+o(h)
\end{align*}

We first look at the term $ \sum_{\left(l_1, l_2\right) \in \pi(2)}\int (T^j \partial_{x_j} u_{l_1}F_{T,\gamma_0})(u_{l_2}v_3+v_{l_2}u_3)$.
Fix $l_1 = 1$ first; the case $ l_1 = 2$ is similar.
Now we have
$$
\partial_j u_{1} = \frac{1}{\sqrt{\gamma_0}} \partial_j v_1
+ \partial_j(\frac{1}{\sqrt{\gamma_0}}) v_1$$
and

\begin{align*}
\partial_j v_{1}
={}&
\frac{1}{h}(\partial_j \Theta_{1})\,e^{\Theta_{1}/h}
A
+ e^{\Theta_{1}/h}\,\partial_jA \\
&+
\frac{1}{h}(\partial_j \overline{\Theta_{1}})\,
e^{\overline{\Theta_{1}}/h}\,
\overline{A}
+ e^{\overline{\Theta_{1}}/h}\,
\overline{\partial_j A} \\
&+
\frac{1}{h}(\partial_j \varphi_{1})\,e^{\varphi_{1}/h}r_2
+ e^{\varphi_{1}/h}\,\partial_j r_2.
\end{align*}

For the second term $\partial_j (\frac{1}{\sqrt{\gamma_0}})v_1$ in  $\partial_j u_1$, the corresponding integral is 
$$
\int T^j F_{T,\gamma_0} v_1 v_2v_3 = O(1).
$$
For the first term in $\partial_j u_1$, we observe that we only need to focus on 
$\frac{1}{h}(\partial_j \Theta_{1})\,e^{\Theta_{1}/h}
A$ and $\frac{1}{h}(\partial_j \overline{\Theta_{1}})\,
e^{\overline{\Theta_{1}}/h}\,
\overline{A}$ in $\partial_j v_1$, as other terms are also $O(1)$, where we used Lemma \ref{r_2:  est new} for estimating $r_2$. The integral corresponding to these two terms would be 
$$
\int \frac{1}{h} T^j F_{T,\gamma_0} (\partial_j \Theta_1) \left((e^{\frac{2(\Phi - \bar{\Phi})}{h}}AB\bar{C}+ AB{C})+ 2(e^{\frac{
\lambda f - \overline{{\lambda f}} }{h}}A\bar{B}\bar{C} + e^{\frac{{
\lambda f - \overline{\lambda f}} - 2{\Phi}+2\overline{\Phi} }{h}}A\bar{B}{C})\right) +O(1)
$$
and 
$$
\int \frac{1}{h}T^j F_{T,\gamma_0} (\partial_j \overline{\Theta_1}) \overline{\left((e^{\frac{2(\Phi - \bar{\Phi})}{h}}AB\bar{C}+ AB{C})+ (e^{\frac{
\lambda f - \overline{{\lambda f}} }{h}}A\bar{B}\bar{C} + e^{\frac{{
\lambda f - \overline{\lambda f}} - 2{\Phi}+2\overline{\Phi} }{h}}A\bar{B}{C})\right)} +O(1)
$$
respectively, where we used remainder estimate on $s_2$ and $t_2$. 
Now since $\lambda f$ and $\lambda f - 2\Phi$ has no critical point, and the $H^1$ norm of $A$, $B$, $C$ are $O(1)$, we have by non stationary phase that 
$$
\int \frac{1}{h} (\partial_j \Theta_1) \left( e^{\frac{
\lambda f - \overline{{\lambda f}} }{h}}A\bar{B}\bar{C} + e^{\frac{{
\lambda f - \overline{\lambda f}} - 2{\Phi}+2\overline{\Phi} }{h}}A\bar{B}{C}\right) = O(1)
$$
and 
$$
\int \frac{1}{h} (\partial_j \overline{\Theta_1}) \overline{\left(e^{\frac{
\lambda f - \overline{{\lambda f}} }{h}}A\bar{B}\bar{C} + e^{\frac{{
\lambda f - \overline{\lambda f}} - 2{\Phi}+2\overline{\Phi} }{h}}A\bar{B}{C}\right)} =O(1)
$$
Also, since $\Phi$ has a critical point at $z_0$, by stationary phase, 
$$
\int \frac{1}{h} (\partial_j \Theta_1) \left(e^{\frac{2(\Phi - \bar{\Phi})}{h}}AB\bar{C}\right) =O(1),
$$
and 
$$
\int \frac{1}{h} (\partial_j \overline{\Theta_1}) \overline{\left(e^{\frac{2(\Phi - \bar{\Phi})}{h}}AB\bar{C}\right)} =O(1).
$$
If $l_1 = 2$ and $l_2 = 1$, the difference in the argument is to replace $\Theta_1$ with $\Theta_2$. 

Hence, we have shown that 
\begin{align*}
 \sum_{\left(l_1, l_2\right) \in \pi(2)}\int (T^j F_{T,\gamma_0} \partial_{x_j} u_{l_1})(u_{l_2}v_3+v_{l_2}u_3)= \frac{1}{h} \int T^j F_{T,\gamma_0} 2\Re \left( (\partial_j \Theta_1+\partial_j \Theta_2)AB{C}  
\right)+O(1).
\end{align*}
In addition, we compute
\begin{align}\label{eq_m1_first_result}
\frac{1}{h} \int T^j F_{T,\gamma_0} 2\Re \left( (\partial_j \Theta_1+\partial_j \Theta_2)AB{C}  
\right) = \frac{1}{h} \int T^j F_{T,\gamma_0} 2\Re((\partial_j \Theta_1+\partial_j \Theta_2)a^3)+ O(1).
\end{align}
by expanding $ABC$, and using Lemma \ref{r_1: L2 est}.

% If $\partial_{x_j}$ acts on $e^{\Theta_1/h}$, the result is
% $$
% \int \frac{1}{h} {F}_{T,\gamma_0} (\partial_j \Theta_1 )(AB\bar{C}+\overline{AB}C)+o(\frac{1}{h})
% $$
% Note we used stationary phase on the term including $e^{\frac{2(\Phi - \bar{\Phi})}{h}}ABC$, and non stationary phase (and $\partial(ABC)=O(1)$) on the other term $2\Re (e^{\frac{\Psi-\Phi - \bar{\Psi}+\bar{\Phi}}{h}}A\bar{B}C + e^{\frac{\Psi+\Phi - \bar{\Psi}-\bar{\Phi}}{h}}A\bar{B}\bar{C})$ in the expression we computed for $v_1 v_2 v_3$. We also used estimates on remainders for the other terms in $v_1 v_2 v_3$ with $r_2$, $s_2$ and $t_2$.

% If $\partial_{x_j}$ acts on $\frac{1}{\sqrt{\gamma_0}}$, the result is $O(1)$; if it acts on $A$ or $e^{\varphi_1/h}r_2$, the result is also $O(1)$.

% we have
% \begin{equation}\label{eq:Tgamma_id}
% %\sum_{\left(l_1, l_2\right) \in \pi(2)} \sum_{j=1}^2 
% \int (T^j \partial_{x_j} u_{l_1}F_{T,\gamma_0})(u_{l_2}v_3+v_{l_2}u_3)=O(1).
% \end{equation}

Using the above and by writing   $\nabla u \cdot \nabla v = 2(\partial u \bar{\partial}v+ \bar{\partial} u \partial v)$, the integral in  \eqref{eq:second integral} reads   
\begin{align}\label{longrhs}
%0& =  \sum_{\left(l_1, l_2\right) \in \pi(2)} \sum_{j=1}^2 
&\int 4\partial\bar{\partial}(T^j \partial_{x_j} u_{l_1}) (u_{l_2}u_3) - \int 4(T^j \partial_{x_j} u_{l_1}) \left[u_{l_2}\left(\partial \left(\frac{1}{\sqrt{\gamma_0}}\right)  \bar{\partial}v_3+\bar{\partial} \left(\frac{1}{\sqrt{\gamma_0}}\right)  {\partial}v_3\right)\right. \nonumber \\ \nonumber
&\qquad\quad\qquad\qquad\qquad\qquad\qquad\qquad \left.+u_3\left(\partial \left(\frac{1}{\sqrt{\gamma_0}}\right)  \bar{\partial}v_{l_2} +\bar{\partial} \left(\frac{1}{\sqrt{\gamma_0}}\right) {\partial}v_{l_2}\right)\right] \\ & \qquad\quad\qquad\qquad\qquad\qquad\qquad\qquad +\frac{1}{h} \int T^j F_{T,\gamma_0} 2\Re((\partial_j \Theta_1+\partial_j \Theta_2)a^3) +O(1).
\end{align}
Here we have 
\begin{multline}\label{first_part}
    \int 4\partial\bar{\partial}(T^j \partial_{x_j} u_{l_1}) (u_{l_2}u_3)=\int 4(\partial T^j \bar{\partial} \partial_{x_j} u_{l_1} +  \bar{\partial} T^j {\partial} \partial_{x_j} u_{l_1}) (u_{l_2}u_3) \\
+\int4T^j\partial_{x_j}\left(\partial \left(\frac{1}{\sqrt{\gamma_0}}\right) \bar{\partial}v_{l_1}+ \bar{ \partial} \left(\frac{1}{\sqrt{\gamma_0}}\right) {\partial}v_{l_1}\right) (u_{l_2}u_3) +\frac{1}{h} \int T^j F_{T,\gamma_0} 2\Re((\partial_j \Theta_1+\partial_j \Theta_2)a^3) +O(1)
\end{multline}
since in the case both 
%since for the first term on the right hand side, if 
$\partial$ and $\bar{\partial}$ hit $T^j$ in the term $4\partial\bar{\partial}(T^j \partial_{j} u_{l_1})$,  then it reduces to the previous case, and we get a term that can be absorbed into the last terms. We also used $\Delta v+qv=0$ for the second term on the right.

Now, we look at
\[
 \int 4 (\partial T^j \bar{\partial} \partial_{j} u_{l_1})(u_{l_2}u_3).
\]
Recall the notations  $\partial_1 = \partial+\bar{\partial}$ and $ \partial_2= i(\partial-\bar{\partial})$. So we have
\begin{equation}\label{pp12}
\partial \partial_{1} = \partial^2+\partial \bar{\partial}, \quad
\partial \partial_{2} = i(\partial^2-\partial \bar{\partial})
\end{equation}
and
\begin{equation}\label{barpp12}
\bar\partial\,\partial_1=\bar\partial\,\partial+\bar\partial^2,\qquad
\bar\partial\,\partial_2=i\bigl(\bar\partial\,\partial-\bar\partial^2\bigr).
\end{equation}
Now notice
\[
u_{l_1}=\gamma_0^{-1/2}v_{l_1},
\]

\[
\partial\bar\partial u_{l_1}
=(\partial\bar\partial \gamma_0^{-1/2})\,v_{l_1}
+(\bar\partial \gamma_0^{-1/2})\,\partial v_{l_1}
+(\partial \gamma_0^{-1/2})\,\bar\partial v_{l_1}
-\frac{q}{4}\gamma_0^{-1/2}\,v_{l_1},
\]
so the first and last term are $O(1)$, and two middle terms will reduce to the previous case where only order one derivative hits $v_{l_1}.$
Hence by \eqref{barpp12}, we have
\begin{align}\label{eq_m1_first_principal}
\sum_{\left(l_1, l_2\right) \in \pi(2)} \sum_{j=1}^2 \int  4 {\partial} T^j ({\bar{\partial} \partial_{j}} u_{l_1}) u_{l_2}u_3=&\sum_{\left(l_1, l_2\right) \in \pi(2)}  \int  4 {\partial} (T^1-iT^2) (\bar{\partial}^2 u_{l_1}) (u_{l_2}u_3) \nonumber \\& +\int \frac{1}{h}  \left( {F}_{T,\gamma_0}(\partial \Theta_1+\partial \Theta_2)a^3+\tilde{F}_{T,\gamma_0}(\bar{\partial} \overline{\Theta_1}+\bar{\partial} \overline{\Theta_2}))\overline{a}^3\right) +O(1)
\end{align}
Now we consider
\[
\bar{\partial}^2 u_{l_1}
=(\bar{\partial}^2\gamma_0^{-1/2})\,v_{l_1}
+2(\bar{\partial}\gamma_0^{-1/2})\,\bar{\partial}v_{l_1}
+\gamma_0^{-1/2}\,\bar{\partial}^2 v_{l_1}.
\]
 The last term $\gamma_0^{-1/2}\,\bar{\partial}^2 v_{l_1}$ would give largest terms since it contains second derivative of $v_{l_1}$.
 The first term $(\bar{\partial}^2\gamma_0^{-1/2})\,v_{l_1}$ would give an $O(1)$ term.
 
Substituting the middle term $2(\bar{\partial}\gamma_0^{-1/2})\,\bar{\partial}v_{l_1}$ into the integral in \eqref{eq_m1_first_principal}
$$
\sum_{\left(l_1, l_2\right) \in \pi(2)}  \int  4 {\partial} (T^1-iT^2) (\bar{\partial}^2 u_{l_1}) (u_{l_2}u_3),
$$
we would get one term of order $\frac{1}{h}$:
$$
\int \frac{1}{h} F_{T,\gamma_0} (\bar{\partial} \overline{\Theta_1}+\bar{\partial} \overline{\Theta_2} )\overline{a}^3,
$$
and other terms of lower order, using a similar argument as showing \eqref{eq_m1_first_result}.

For the last term $\gamma_0^{-1/2}\,\bar{\partial}^2 v_{l_1}$, we first consider the case $l_1=1$, while the case $l_1=2$ would be similar.
Recall 
$$
v_1 =e^{\Theta_1/h}A
      + \overline{e^{\Theta_1/h}A}
      + e^{\varphi_1/h} r_2
$$
and so
\begin{align}\label{second_deriv_v}
\bar\partial^2 v_{1}
&=\frac{1}{h^2}\Big(
e^{\overline{\Theta_1}/h}\,(\bar\partial \overline{\Theta_1})^{2}\,\overline{A}
\;+\;
e^{\varphi_1/h}\,(\bar\partial \varphi_1)^{2}\,r_2
\Big) \nonumber \\[1mm] \nonumber
&\quad
+\frac{1}{h}\Big(
e^{\overline{\Theta_1}/h}\big[
(\bar\partial^2 \overline{\Theta_1})\,\overline{A}
+2(\bar\partial \overline{\Theta_1})\,\bar\partial \overline{A}
\big]
\;+\;
e^{\varphi_1/h}\big[
(\bar\partial^2 \varphi_1)\,r_2
+2(\bar\partial \varphi_1)\,\bar\partial r_2
\big]
\Big)\\[1mm] \nonumber
&\quad
+\Big(
e^{\Theta_1/h}\,\bar\partial^{2}A
+
e^{\overline{\Theta_1}/h}\,\bar\partial^{2}\overline{A}
+
e^{\varphi_1/h}\,\bar\partial^{2}r_2
\Big)\\[1mm]
\quad  &=
\frac{1}{h^2}\Big(
e^{\overline{\Theta_1}/h}\,(\bar\partial \overline{\Theta_1})^{2}\,\overline{A}
\;\Big)+\frac{1}{h}\Big(
e^{\overline{\Theta_1}/h}\big[
(\bar\partial^2 \overline{\Theta_1})\,\overline{A}
+2(\bar\partial \overline{\Theta_1})\,\bar\partial \overline{A}
\big]
\;\Big)+o\left(\frac{1}{h}\right)
\end{align}
where we used remainder estimates on $r_1$, $r_2$.

Substituting the second term on the right hand side of \eqref{second_deriv_v} to the integral in \eqref{eq_m1_first_principal}, by a similar computation as in showing \eqref{eq_m1_first_result}, the corresponding integral would be 
\begin{align}\label{quant_m1_2}
& \nonumber \int \frac{1}{h} F_{T,\gamma_0} \big[
(\bar\partial^2 \overline{\Theta_1})\,\overline{ABC}
+2(\bar\partial \overline{\Theta_1})\,(\bar\partial \overline{A})\overline{{BC}}
\big] +O(1) \\
& = \frac{1}{h}\int F_{T,\gamma_0}[\bar\partial^2 \overline{\Theta_1})\,\overline {a}^3+ 2(\bar\partial \overline{\Theta_1})\,\bar a^2(\bar\partial \bar a ) ]+ O(1)
\end{align}
For the first term on the right of \eqref{second_deriv_v}, the corresponding integral is 
\begin{align}\label{quant_m1_3}
     &\int \frac{4}{h^2\gamma_0^{3/2}} \partial(T^1-iT^2)(\bar\partial \overline{\Theta_1})^{2} e^{\overline{\Theta}_1/h} \bar{A} v_2v_3
 \nonumber  \\ 
 \nonumber & = \nonumber \frac{4}{h^2} \int F_{T,\gamma_0} (\bar\partial \overline{\Theta_1})^{2} \overline{ABC}
 + \frac{4}{h^2} \int F_{T,\gamma_0} (\bar\partial \overline{\Theta_1})^{2} \overline{ (e^{\frac{
\lambda f - \overline{{\lambda f}} }{h}}A\bar{B}\bar{C} + e^{\frac{{
\lambda f - \overline{\lambda f}} - 2{\Phi}+2\overline{\Phi} }{h}}A\bar{B}{C})} 
 \\& \nonumber  +  \frac{4}{h} C_{z_0} \gamma_0^{-3/2} \left(\bar{a}|a|^2\right) {\partial} (T^1-iT^2) (\bar\partial \overline{\Theta_1})^{2}  e^{(\bar\Phi-{\Phi})/h}|_{z=z_0}+ O(1)\\
 & = \nonumber \frac{4}{h^2} \int F_{T, \gamma_0} (\bar\partial \overline{\Theta_1})^{2} \bar{a}^3  \\& \nonumber + \frac{4}{h^2} \int F_{T, \gamma_0} (\bar\partial \overline{\Theta_1})^{2} \bar{a}^2 (\bar{r}_1+\bar{s}_1+\bar{t}_1) + \frac{4}{h} \int F_{T, \gamma_0} (\bar\partial \overline{\Theta_1})^{2}\bar{a}^2(\bar{a}_0+\bar{b}_0+\bar{c}_0) \\
 & + \frac{4}{h} C_{z_0} \gamma_0^{-3/2} \left(\bar{a}|a|^2\right) {\partial} (T^1-iT^2) (\bar\partial \overline{\Theta_1})^{2}  e^{(\bar\Phi-{\Phi})/h}|_{z=z_0}+ o(\frac{1}{h})
\end{align}
where we used integration part for the second nonstationary phase integral and the estimate $||A||_{H^2_{loc}}, ||B||_{H^2_{loc}}, ||C||_{H^2_{loc}}=o(\frac{1}{h})$.

We remark here that the above computation shows the general pattern for all the later computations for the integral of a product of (derivatives) of solutions. After expansion, the right-hand side can be divided into three types:  integral of highest order in $h$ with no exponential term, or of one lower order with an exponential term with a phase that has no critical point, or of one lower order with an exponential term with a phase that has a nondegenerate critical point. In the last case, we have used stationary phase method above and obtained an evaluation term at the critical point.
We will omit some details in the computation later that follows this pattern.

For $l_1=2$, we get same terms as in \eqref{quant_m1_2} and \eqref{quant_m1_3} except for replacing $\Theta_1$ by $\Theta_2$.

Similarly, we look at 
$$
 \int 4\bar{\partial} T^j {\partial} \partial_{x_j} u_{l_1} (u_{l_2}u_3)
$$
back in \eqref{first_part}.
Then by \eqref{pp12}, we have
\begin{align}\label{eq_m1_second_principal}
\sum_{\left(l_1, l_2\right) \in \pi(2)} \sum_{j=1}^2 \int  4 \bar{\partial} T^j ({{\partial} \partial_{j}} u_{l_1}) u_{l_2}u_3=&\sum_{\left(l_1, l_2\right) \in \pi(2)}  \int  4 \bar{\partial} (T^1+iT^2) ({\partial}^2 u_{l_1}) (u_{l_2}u_3) \nonumber \\& +\int \frac{1}{h}  \left( {F}_{T,\gamma_0}(\partial \Theta_1+\partial \Theta_2)a^3+\tilde{F}_{T,\gamma_0}(\bar{\partial} \overline{\Theta_1}+\bar{\partial} \overline{\Theta_2}))\overline{a}^3\right) +O(1)
\end{align}
And we have quantities similarly to \eqref{quant_m1_2} and \eqref{quant_m1_3}:
\begin{align}\label{quant_m1_4}
\int \frac{1}{h} F_{T,\gamma_0} \big[
(\partial^2 {\Theta_1})\,{a}^3
+2(\partial {\Theta_1})\,(\partial {a}){{a}^2}
\big] +O(1)
\end{align}
and
\begin{align}\label{quant_m1_5}
     &\int \frac{4}{h^2\gamma_0^{3/2}} \bar{\partial}(T^1+iT^2)(\partial {\Theta_1})^{2} e^{{\Theta}_1/h} {A} v_2v_3
 \nonumber   \\ & 
=  \nonumber \frac{4}{h^2} \int F_{T, \gamma_0} (\partial {\Theta_1})^{2} {a}^3  \\& \nonumber + \frac{4}{h^2} \int F_{T, \gamma_0} (\partial {\Theta_1})^{2} {a}^2 ({r}_1+{s}_1+{t}_1) + \frac{4}{h} \int F_{T, \gamma_0} (\partial {\Theta_1})^{2}{a}^2({a}_0+{b}_0+{c}_0) \\
 & +  \frac{4}{h} C_{z_0} \gamma_0^{-3/2} \left({a}|a|^2\right) \bar{\partial} (T^1+iT^2)  (\partial {\Theta_1})^{2}  e^{(\Phi-\bar{\Phi})/h}|_{z=z_0}+ o(\frac{1}{h})
\end{align}

Next, we look at the second part in \eqref{first_part}:
$$
\int4T^j\partial_{j}\left(\partial \left(\frac{1}{\sqrt{\gamma_0}}\right) \bar{\partial}v_{l_1}+ \bar{ \partial} \left(\frac{1}{\sqrt{\gamma_0}}\right) {\partial}v_{l_1}\right) (u_{l_2}u_3)
$$
First consider
$$
\int4T^j\partial_{j}\left(\partial \left(\frac{1}{\sqrt{\gamma_0}}\right) \bar{\partial}v_{l_1}\right) (u_{l_2}u_3)
$$
If $\partial_{j}$ hits $\partial \left( \frac{1}{\sqrt{\gamma_0}}\right)$, then we reduce to the case where only order one derivative hits $v_{l_1}$, and we would get a same term as the in the second integral on the right hand side of \eqref{eq_m1_first_principal}, with a different coefficient $F_{T,\gamma_0}$. If $\partial_j$ hits $\bar{\partial}v_{l_1}$, we will use again \eqref{barpp12} and \eqref{second_deriv_v}(Note that $4\bar \partial \partial v = qv$). Hence we have (first consider $l_1=1$)
\begin{align}\label{quant_m1_6}
\nonumber &\int4T^j\left(\partial \left(\frac{1}{\sqrt{\gamma_0}}\right) \bar{\partial}\partial_{j}v_{l_1}\right) (u_{l_2}u_3) = \int 4(T^1-iT^2)\partial \left(\frac{1}{\sqrt{\gamma_0}}\right) (\bar{\partial}^2v_{l_1})u_{l_2}u_3 + O(1) \\
& = \nonumber \frac{4}{h^2} \int F_{T,\gamma_0} (\bar\partial \overline{\Theta_1})^{2} \overline{a}^3  \\& \nonumber  +  \frac{4}{h} C_{z_0} (\gamma_0^{-1} \left(\bar{a}|a|^2\right)\partial \left(\frac{1}{\sqrt{\gamma_0}}\right)  (T^1-iT^2) (\bar\partial \overline{\Theta_1})^{2}  e^{(\bar\Phi-{\Phi})/h}|_{z=z_0} \\& 
+\int \frac{1}{h} F_{T,\gamma_0} \big[
(\bar\partial^2 \overline{\Theta_1})\,\overline{a}^3
+2(\bar\partial \overline{\Theta_1})\,(\bar\partial \overline{a})\overline{{a}}^2
\big]+  o(\frac{1}{h})
\end{align}
For $l_1=2$, we get same terms as above except for replacing $\Theta_1$ by $\Theta_2$.

Similarly, consider 
$$
\int4T^j\partial_{j}\left(\bar \partial \left(\frac{1}{\sqrt{\gamma_0}}\right) {\partial}v_{l_1}\right) (u_{l_2}u_3)
$$
and we focus on (first for $l_1=1$)
\begin{align}\label{quant_m1_7}
\nonumber &\int4T^j\left(\bar\partial \left(\frac{1}{\sqrt{\gamma_0}}\right) {\partial}\partial_{j}v_{l_1}\right) (u_{l_2}u_3) = \int 4(T^1+iT^2)\bar\partial \left(\frac{1}{\sqrt{\gamma_0}}\right) ({\partial}^2v_{l_1})u_{l_2}u_3 + O(1) \\
&= \nonumber \frac{4}{h^2} \int F_{T,\gamma_0} (\partial {\Theta_1})^{2} {a}^3\\& \nonumber  +  \frac{4}{h} C_{z_0} (\gamma_0^{-1} \left({a}|a|^2\right)\bar\partial \left(\frac{1}{\sqrt{\gamma_0}}\right)  (T^1+iT^2) (\partial {\Theta_1})^{2}  e^{(\Phi-\bar{\Phi})/h}|_{z=z_0}   \\ & +\int \frac{1}{h} F_{T,\gamma_0} \big[
(\partial^2 {\Theta_1})\,{a}^3
+2(\partial {\Theta_1})\,(\partial {a}){{a}}^2
\big]z+  o(\frac{1}{h})
\end{align}
Similarly, we get the result for $l_1=2$ by replacing $\Theta_1$ by $\Theta_2$.

Next, we return to \eqref{longrhs} and consider the remaining integral  
\begin{align*}
   & \sum_{\left(l_1, l_2\right) \in \pi(2)} \sum_{j=1}^2 \int - 4T^j \partial_{x_j} u_{l_1} \Bigg\{ u_{l_2} \left[ \partial \left(\frac{1}{\sqrt{\gamma_0}}\right) \bar{\partial}v_3 +\bar{\partial}\left(\frac{1}{\sqrt{\gamma_0}}\right) {\partial}v_3\right] \\
&\qquad\quad\qquad\qquad\qquad\quad\qquad\qquad+u_3\left[ \partial \left(\frac{1}{\sqrt{\gamma_0}}\right)\bar{\partial}v_{l_2} +\bar{\partial} \left(\frac{1}{\sqrt{\gamma_0}}\right){\partial}v_{l_2}\right]\Bigg\}
\end{align*}

First, we consider $l_1=1$. Now note that since $\Theta_3$ is antiholomorphic, we have
\[
\partial v_3
=
e^{\overline{\Theta_3}/h}
\left(
\frac{1}{h}(\partial \overline{\Theta_3})C
+\partial C
\right)
+
e^{\varphi_3/h}
\left(
\frac{1}{h}(\partial \varphi_3)t_2
+\partial t_2
\right),
\]

\[
\bar\partial v_3
=
e^{\Theta_3/h}
\left(
\frac{1}{h}(\bar\partial \Theta_3)\overline{C}
+\bar\partial \overline{C}
\right)
+
e^{\varphi_3/h}
\left(
\frac{1}{h}(\bar\partial \varphi_3)t_2
+\bar\partial t_2
\right).
\]

And we recall
$$
\partial_j u_{1} = \frac{1}{\sqrt{\gamma_0}} \partial_j v_1
+ \partial_j(\frac{1}{\sqrt{\gamma_0}}) v_1$$
and

\begin{align*}
\partial_j v_{1}
={}&
\frac{1}{h}(\partial_j \Theta_{1})\,e^{\Theta_{1}/h}
A
+ e^{\Theta_{1}/h}\,\partial_jA \\
&+
\frac{1}{h}(\partial_j \overline{\Theta_{1}})\,
e^{\overline{\Theta_{1}}/h}\,
\overline{A}
+ e^{\overline{\Theta_{1}}/h}\,
\overline{\partial_j A} \\
&+
\frac{1}{h}(\partial_j \varphi_{1})\,e^{\varphi_{1}/h}r_2
+ e^{\varphi_{1}/h}\,\partial_j r_2.
\end{align*}

In $\partial v_3$, except the first term $\frac{1}{h}e^{\overline{\Theta_3}/h}
(\partial \overline{\Theta_3})C$, all other terms multiplying by $\partial_ju_{l_1}$ would reduce to cases in which only an order-one derivative hits $v_{l_1}$, and we obtain terms similar to those in \eqref{eq_m1_first_result}. For this first term mentioned, we compute similarly as in \eqref{quant_m1_3}, where we have an order $h^{-2}$ non oscillating  (without exponential) integral containing  $\partial \overline {\Theta}_3$, $\partial_j \Theta_1$,
\begin{align*}
\frac{1}{h^2}\int F_{T,\gamma_0} (\partial_j \Theta_1)(\partial\overline{\Theta}_3) ABC & = \frac{1}{h^2}\int F_{T,\gamma_0}(\partial_j \Theta_1)(\partial\overline{\Theta}_3)a^3 \\& + \frac{1}{h^2}\int F_{T,\gamma_0}(\partial_j \Theta_1)(\partial\overline{\Theta}_3)a^2(r_1+s_1+t_1) \\ &+ \frac{1}{h}\int F_{T,\gamma_0}(\partial_j \Theta_1)(\partial\overline{\Theta}_3)a^2(a_0+b_0+c_0) ,
\end{align*}
while we don't have the evaluation term which comes from using stationary phase, since $\bar{\partial} \Theta_3 = -2\bar{\partial}\bar{\Phi}=0$ at $z_0$. Other terms are $o(\frac{1}{h})$ as in \eqref{quant_m1_3}.

The situation is similar for $l_1=2$, and for $\bar{\partial}v_3$. 

Now, we turn to the last term 
\begin{align*}
   & \sum_{\left(l_1, l_2\right) \in \pi(2)} \sum_{j=1}^2 \int - 4T^j \partial_{x_j} u_{l_1} u_3\left[ \partial \left(\frac{1}{\sqrt{\gamma_0}}\right)\bar{\partial}v_{l_2} +\bar{\partial} \left(\frac{1}{\sqrt{\gamma_0}}\right){\partial}v_{l_2}\right]
\end{align*}
Again, we first fix $l_1=1$. Now we have 
% Assumptions: \Theta_2 holomorphic (\bar\partial\Theta_2=0), B holomorphic (\bar\partial B=0).

\[
\partial v_2
=
e^{\Theta_2/h}\left(\frac{1}{h}(\partial\Theta_2)\,B+\partial B\right)
+
e^{\varphi_2/h}\left(\frac{1}{h}(\partial\varphi_2)\,s_2+\partial s_2\right),
\]

\[
\bar\partial v_2
=
e^{\overline{\Theta_2}/h}\left(\frac{1}{h}(\bar\partial\overline{\Theta_2})\,\overline{B}+\bar\partial\overline{B}\right)
+
e^{\varphi_2/h}\left(\frac{1}{h}(\bar\partial\varphi_2)\,s_2+\bar\partial s_2\right).
\]

Thus, we can first compute 
\begin{align*}
\nonumber &\int4T^j\left(\partial \frac{1}{\sqrt{\gamma_0}}\right) \partial_{j}u_{l_1} \bar \partial v_{l_2}u_3 \\& \nonumber = \int 4(T^1+iT^2)\partial \left(\frac{1}{\sqrt{\gamma_0}}\right) ({\partial}u_{l_1})\bar \partial v_{l_2}u_3 + \int 4 (T^1-iT^2) \partial \left(\frac{1}{\sqrt{\gamma_0}}\right) (\bar{\partial}u_{l_1})\bar \partial v_{l_2}u_3 
\end{align*}
and 
\begin{align}\label{eq_m1_lastterm_part1}
 &  \int 4(T^1+iT^2)\partial \left(\frac{1}{\sqrt{\gamma_0}}\right)({\partial}u_{l_1})\bar \partial v_{l_2}u_3 \nonumber
\\&=  \nonumber \frac{4}{h^2} \int \left (F_{T,\gamma_0} e^{\frac{\lambda f - \overline{\lambda f} -2\Phi + 2\bar\Phi}{h}} \partial {\Theta_1}  \bar{\partial}\overline\Theta_2 {A\bar{B}C} + F_{T,\gamma_0} e^\frac{\lambda f - \overline{\lambda f}}{h} \partial \Theta_1 \bar{\partial} \overline\Theta_2 A\bar{B}\bar{C} \right) \\&  \nonumber 
+ \frac{4}{h} \int F_{T,\gamma_0}e^{\frac{\lambda f -\overline{\lambda f} -2\Phi+2\bar{\Phi}}{h}} \partial \Theta_1 A (\bar\partial \bar B)C + F_{T,\gamma_0} e^{\frac{\lambda f- \overline{\lambda f}}{h}}\partial \Theta_1 A(\bar{\partial} \bar B)\bar C \\& \nonumber
+ \frac{4}{h} \int F_{T,\gamma_0}e^{\frac{\lambda f -\overline{\lambda f} -2\Phi+2\bar{\Phi}}{h}}  (\partial A) (\bar\partial \overline \Theta_2) \bar BC + F_{T,\gamma_0} e^{\frac{\lambda f- \overline{\lambda f}}{h}}(\partial  A)(\bar\partial \overline \Theta_2)\bar B\bar C \\&
+ o(\frac{1}{h}) 
\end{align}
by comparing to the expansion of product in \eqref{v1v2v3}, while here we do not have the $AB\bar{C}$ and $ABC$ terms.
Again, by non-stationary phase, using that $||A||_{H^2_{loc}}, ||B||_{H^2_{loc}}, ||C||_{H^2_{loc}}$ are $o(\frac{1}{h})$, the whole integral in \eqref{eq_m1_lastterm_part1} is $o(\frac{1}{h})$. 

Finally, we compute the integral
\begin{align}\label{m=1 last term}
  &  \int 4(T^1-iT^2)\partial \left(\frac{1}{\sqrt{\gamma_0}}\right) (\bar{\partial}u_{l_1})\bar \partial v_{l_2}u_3 \\& =  
    \nonumber \frac{4}{h^2} \int F_{T,\gamma_0} (\bar\partial \overline{\Theta_1})(\bar\partial \overline{\Theta_2}) \overline{ABC}  \\& \nonumber  +  \frac{4}{h} C_{z_0} (\gamma_0^{-1} \left(\bar{a}|a|^2\right)\partial \left(\frac{1}{\sqrt{\gamma_0}}\right)  (T^1-iT^2) (\bar\partial \overline{\Theta_1})(\bar\partial \overline{\Theta_2}) e^{(\bar\Phi-{\Phi})/h}|_{z=z_0} \\& \nonumber
+\int \frac{1}{h} F_{T,\gamma_0} \big[
2(\bar\partial \overline{\Theta_1})\,(\bar\partial \overline{a})\overline{{a}}^2+ 2\bar{a}(\bar{\partial} \overline{\Theta_2})(\bar{\partial}\overline{a}) \bar{a}
\big]+  o(\frac{1}{h})
\end{align}

Combining the results in \eqref{eq_m1_first_result}, \eqref{eq_m1_first_principal}, \eqref{quant_m1_2}, \eqref{quant_m1_3}, \eqref{quant_m1_4}, \eqref{quant_m1_5},  \eqref{quant_m1_6} and \eqref{quant_m1_7}, \eqref{m=1 last term},  we get from \eqref{eq:second integral} that

\begin{align*}
0=&\frac{4}{h^2}\int F_{T,\gamma_0}H_{\Theta} a^3
+\frac{4}{h^2}\int F_{T,\gamma_0}H_{\Theta} a^2(r_1+s_1+t_1) \\
&\quad
+\frac{1}{h}\int F_{T,\gamma_0}\, H_{\Theta} g(a,a_0, b_0, c_0)
\\
&\quad
+\frac{4}{h} \sum_{i=1,2} \biggl\{  C_{z_0}
\left.
\left(
\gamma_0^{-3/2}\, a|a|^2 \,\bar\partial(T^1+iT^2)\,(\partial\Theta_i)^2
\,e^{(\Phi-\bar\Phi)/h} 
\right)
\right|_{z=z_0}
\\
&\quad
+ C_{z_0}
\left.
\left(
\gamma_0^{-3/2}\,\overline{a}|a|^2 \,\partial(T^1-iT^2)\,
(\bar\partial\overline{\Theta_i})^2
\,e^{(\bar\Phi-\Phi)/h}
\right)
\right|_{z=z_0}
\\
&\quad
+ C_{z_0}
\left.
\left(
\gamma_0^{-1}(a|a|^2)\,\bar\partial\!\left(\frac{1}{\sqrt{\gamma_0}}\right)
(T^1+iT^2)(\partial\Theta_i)^2 e^{(\Phi-\bar\Phi)/h}
\right)
\right|_{z=z_0}
\\
&\quad
+ C_{z_0}
\left.
\left(
\gamma_0^{-1}(\overline{a}|a|^2)\,\partial\!\left(\frac{1}{\sqrt{\gamma_0}}\right)
(T^1-iT^2)(\bar\partial\overline{\Theta_i})^2 e^{(\bar\Phi-\Phi)/h}
\right)
\right|_{z=z_0} \biggr\} \\&\quad + o\!\left(\frac{1}{h}\right)
\end{align*}
where $H(\Theta)$ is a smooth function of derivatives of $\Theta_1$ and $\Theta_2$, $g(a,a_0,b_0,c_0)$ is a polynomial of $a, a_0, b_0$ and $c_0$ and their derivatives. Here, we note that $C_{z_0}$ is the same in all the middle results we used, because it depends only on the phase of the exponential in the integral for which we apply the stationary phase approximation.

First, by letting $h\rightarrow 0$,  we obtain the integrals of $h^{-2}
$ order in the front adds up to zero. Now, focus on the remaining terms. We separate the non-oscillating integrals of order $h^{-1}$ and the evaluation terms  using the same trick as in \cite{GuillarmouTzou2011}: 
Since $\psi(p)\neq 0$ we may choose a sequence of $h_j\to 0$ such that 
\begin{align*}
&\Re\left(C_{z_0}
\left.
\left(
\gamma_0^{-3/2}\, a|a|^2 \,\bar\partial(T^1+iT^2)\,(\partial\Theta_1)^2
\,e^{(\Phi-\bar\Phi)/h_j}
\right)
\right|_{z=z_0}\right) \\ & 
+ \Re \left(C_{z_0}
\left.
\left(
\gamma_0^{-1}(a|a|^2)\,\bar\partial\!\left(\frac{1}{\sqrt{\gamma_0}}\right)
(T^1+iT^2)(\partial\Theta_1)^2 e^{(\Phi-\bar\Phi)/h_j}
\right)
\right|_{z=z_0} \right) = 1
\end{align*}
and another sequence $\tilde h_j\to 0$ 
such that 
\begin{align*}
&\Re\left(C_{z_0}
\left.
\left(
\gamma_0^{-3/2}\, a|a|^2 \,\bar\partial(T^1+iT^2)\,(\partial\Theta_1)^2
\,e^{(\Phi-\bar\Phi)/\tilde{h}_j}
\right)
\right|_{z=z_0}\right) \\ & 
+ \Re \left(C_{z_0}
\left.
\left(
\gamma_0^{-1}(a|a|^2)\,\bar\partial\!\left(\frac{1}{\sqrt{\gamma_0}}\right)
(T^1+iT^2)(\partial\Theta_1)^2 e^{(\Phi-\bar\Phi)/\tilde{h}_j}
\right)
\right|_{z=z_0} \right) = -1
\end{align*}
for all $j$. 
Adding the expansion with $h=h_j$ and $h=\tilde h_j$, we deduce that the non-oscillating integrals of order $h^{-1}$ add up to zero. Therefore, we obtain 
$$
\Re \left({\bar{\partial}(T^1+iT^2)}- \frac{\bar{\partial}{\gamma_0}}{{\gamma_0}}(T^1+iT^2)\right)=0
$$
in $\Omega$,
 since $C_{z_0}\neq 0$, $a(z_0)\neq 0$, and $\partial \Theta_1 (z_0)\neq 0$.

In addition, let $u_2, u_3$ be the same as before, but let
$$
u_1 = i\frac{1}{\sqrt{\gamma_0}}(e^{\Theta_1/h}(\tilde{a} + h a_0 + r_1)
      - \overline{e^{\Theta_1/h}(\tilde{a} + h a_0 + r_1)}
      + e^{\varphi_1/h} r_2) 
$$
where $\tilde{a}$ is a holomorphic function that is purely imaginary on $\Gamma_0$, which ensures in Lemma 4.2.1 in  \cite{GuillarmouTzou2011} the existence of $a_0$ that annihilates $u_1$ on $\Gamma_0$. Note that by multiplying $i$, the new $u_1$ is still real-valued.

With this choice of $u_1, u_2$ and $u_3$, following the same computation, note that where $\bar{A}$ appeared before would be replaced by $-\bar{A},$ we would obtain
$$
\Im \left({\bar{\partial}(T^1+iT^2)}- \frac{\bar{\partial}{\gamma_0}}{{\gamma_0}}(T^1+iT^2)\right)=0
$$
in $\Omega$.

Therefore, we get 
$$
{\bar{\partial}(T^1+iT^2)}- \frac{\bar{\partial}{\gamma_0}}{{\gamma_0}}(T^1+iT^2)=0
$$
in $\Omega$. By uniqueness of solution for the elliptic equation, we have $T^1+iT^2=0$ in $\Omega$. It follows that $T^1=T^2=0$ in $\Omega$ since we assume the coefficients are real valued.

\section{Proof of Proposition: $m= 2$}
For $m=2$, we start with
\begin{align}\label{eq:m2identity}
    \sum_{\left(l_1, l_2, l_3\right) \in \pi(3)} \sum_{j, k=1}^2 \int T^{j k}(x) \partial_{x_j} u_{l_1} \partial_{x_k} u_{l_2} \nabla u_{l_3} \cdot \nabla u_4 =0.
\end{align}

If we let two of the functions $u_1, u_2,u_{3}$  be constant functions equal to $1$, then we get
\begin{equation*}
    \int  T^{00}(x) \nabla u \cdot \nabla v  = 0 ,
\end{equation*}
which is the same as the identity \eqref{s3e1} in  the $m=1$ case for $T^{0}$. This proves $T^{0 0}=0$ in $\Omega$. Next, we let one of $u_1, u_2,u_{3}$ be the constant function $1$. This yields
\begin{equation*}
    \sum_{\left(l_1, l_2\right) \in \pi(2)} \sum_{j=1}^2 \int T^{0j}(x) \partial_{x_j} u_{l_1} \nabla u_{l_2} \cdot \nabla u_3  = 0,
\end{equation*}
which is the same identity \eqref{eq3} as in the
 $m=1$ case for $T^1$ and $T^2$. Thus we obtain  $T^{01}$ and  $T^{02}=0$ in $\Omega$.

 By using $T^{00}=T^{01}=T^{02}=0$,  the identity \eqref{eq:m2identity} becomes
\begin{align}\label{eqT^ij_neq0}
    \sum_{\left(l_1, l_2, l_3\right) \in \pi(3)} \sum_{j, k=1}^2 \int T^{j k}(x) \partial_{x_j} u_{l_1} \partial_{x_k} u_{l_2} \nabla u_{l_3} \cdot \nabla u_4 =0.
\end{align}
Next we choose solutions $u_l$, $l=1,\ldots, 4$, 

\begin{align*}
u_1 &= \frac{1}{\sqrt{\gamma_0}}(e^{\Theta_1/h}(a + h a_0 + r_1)
      + \overline{e^{\Theta_1/h}(a + h a_0 + r_1)}
      + e^{\varphi_1/h} r_2) \\
u_2 &= \frac{1}{\sqrt{\gamma_0}}(e^{\Theta_2/h}(a + h b_0 + s_1)
      + \overline{e^{\Theta_2/h}(a + h b_0 + s_1)}
      + e^{\varphi_2/h} s_2)\\
u_3 &=\frac{1}{\sqrt{\gamma_0}}( e^{\Theta_3/h} {({a} + h c_0 + t_1)}
      + \overline{e^{\Theta_3/h}({a} + h c_0 + t_1)}
      + e^{\varphi_3/h} t_2)\\
u_4 &=\frac{1}{\sqrt{\gamma_0}}( e^{\Theta_4/h} \overline{({a} + h d_0 + l_1)}
      + \overline{e^{\Theta_4/h}}({a} + h d_0 + l_1)
      + e^{\varphi_4/h} l_2)\\      
\end{align*}

For the first set of solutions, we let $\Theta_1, \Theta_2, \Theta_3, \Theta_4$ be
\begin{align*}
    \lambda f, \quad \lambda f, \quad\Phi-2\lambda f , \quad -\bar{\Phi}
\end{align*}
respectively, where $\lambda$ can be chosen the same as before, so that $\lambda f$, $\Phi -\lambda f$ has no critical point in $\Omega$
.

Like before, let $A=a+ha_0+r_1,B=a+hb_0+s_1, {C}={a+hc_0+t_1}, D = a+hd_0+ l_1$, we frist compute
\begin{align}\label{v1v2v3v4}
v_1v_2v_3v_4 \nonumber
&= \nonumber
2\operatorname{Re}\Bigl[
e^{(\Theta_1+\Theta_2+\Theta_3+\Theta_4)/h}ABC\overline D
+e^{(\Theta_1+\Theta_2+\Theta_3+\overline{\Theta_4})/h}ABCD
\\ \nonumber
&\qquad
+e^{(\Theta_1+\Theta_2+\overline{\Theta_3}+\Theta_4)/h}AB\overline C\,\overline D
+e^{(\Theta_1+\Theta_2+\overline{\Theta_3}+\overline{\Theta_4})/h}AB\overline C\,D
\\ \nonumber
&\qquad
+e^{(\Theta_1+\overline{\Theta_2}+\Theta_3+\Theta_4)/h}A\overline B C\overline D
+e^{(\Theta_1+\overline{\Theta_2}+\Theta_3+\overline{\Theta_4})/h}A\overline B C D
\\ \nonumber
&\qquad
+e^{(\Theta_1+\overline{\Theta_2}+\overline{\Theta_3}+\Theta_4)/h}A\overline B\,\overline C\,\overline D
+e^{(\Theta_1+\overline{\Theta_2}+\overline{\Theta_3}+\overline{\Theta_4})/h}A\overline B\,\overline C\,D
\Bigr]\\
&\qquad
+ \text{terms containing at least one of $r_2, s_2, t_2, l_2$}
\end{align}
By our choice of $\Theta_i'$ s, 
$$
\Theta_1+\Theta_2+\Theta_3+\Theta_4 = \Phi- \bar{\Phi}, \qquad \Theta_1+\Theta_2+\Theta_3+\overline{\Theta}_4 = 0,
$$
while all the other six phases in the expansion have no critical point. 

Therefore, comparing to \eqref{v1v2v3}, by a similar computation in the case $m=1$, we get 
\begin{equation}\label{m2result1}
T^{11}+2iT^{12}-T^{22}=0.
\end{equation}

To get other linear combinations of $T^{11}, T^{12}$ and $T^{22}$, we consider the second set of solutions, where we choose $\Theta_1, \Theta_2, \Theta_3, \Theta_4$ to be
\begin{align}\label{phases m = 2}
    \lambda f,  \Phi-\lambda f , \overline{\lambda f}, -\overline{\Phi}-\overline{\lambda f}
\end{align}
respectively. Here, we choose $\lambda$ and $f$ such that the phases all have no critical points, and are real on $\Gamma_0$.
To achieve this,  we first let $f$ be a conformal map that maps $\Gamma_0$ to part of the real line. Now as $f'\neq 0$ on $\Omega$, consider  
$$
r_1(z) = \frac{\Phi'(z)}{f'(z)}, \quad r_2(z) = -\frac{\Phi'(z)}{f'(z)}
$$
is holomorphic on $\Omega$,
with both images being some bounded subset of $\C$. Let $\lambda$ be chosen outside the image of $r_1$ and $r_2$, then all the phases have no critical point in the domain. Also, the phases are real on $\Gamma_0$ since $f$ and $\Phi$ are. 

Then we rewrite \eqref{eqT^ij_neq0} as 
\begin{multline}\label{eqT11+T22}  
%\end{multi}%\label{eq10}
    \sum_{\left(l_1, l_2, l_3\right) \in \pi(3)}  \int \Big[T^{11}((\partial+\bar{\partial}) u_{l_1})( (\partial+\bar{\partial}) u_{l_2})+ T^{12} ((\partial+\bar{\partial})u_{l_1})( i(\partial-\bar \partial){u_{l_2}} )\\
    + T^{21} (i(\partial-\bar{\partial})u_{l_1}) ((\partial+\bar \partial){u_{l_2}}) -T^{22} ((\partial-\bar \partial)u_{l_1} )((\partial-\bar \partial)u_{l_2} ) \Big] (\partial u_{l_3}\bar{\partial}u_4+ \bar{\partial} u_{l_3} \partial u_4)  =0.
\end{multline}
Similar to the expansion \eqref{v1v2v3v4}, if we focus on the terms without remainders $r_2, s_2, t_2$ and $l_2$, the eight phase combinations inside the real part become
\begin{align*}
\Theta_1+\Theta_2+\Theta_3+\Theta_4
&=\Phi-\overline{\Phi},\\
\Theta_1+\Theta_2+\Theta_3+\overline{\Theta_4}
&=\overline{\lambda f}-\lambda f,\\
\Theta_1+\Theta_2+\overline{\Theta_3}+\Theta_4
&=\Phi-\overline{\Phi}+\lambda f-\overline{\lambda f},\\
\Theta_1+\Theta_2+\overline{\Theta_3}+\overline{\Theta_4}
&=0,\\
\Theta_1+\overline{\Theta_2}+\Theta_3+\Theta_4
&=\lambda f-\overline{\lambda f},\\
\Theta_1+\overline{\Theta_2}+\Theta_3+\overline{\Theta_4}
&=\overline{\Phi}-\Phi,\\
\Theta_1+\overline{\Theta_2}+\overline{\Theta_3}+\Theta_4
&=\lambda f-\overline{\lambda f},\\
\Theta_1+\overline{\Theta_2}+\overline{\Theta_3}+\overline{\Theta_4}
&=\overline{\Phi}-\Phi+\lambda f-\overline{\lambda f}.
\end{align*}
We first consider the  integral term in the expansion of the left-hand side, where the phase in the integral vanishes, i.e. either 
$$
\Theta_1 + {\Theta}_2 +  \overline{\Theta}_3 +\overline{\Theta_4}
$$
or its conjugate. Note that now $\Theta_1, \Theta_2$ are holomorphic, $\Theta_3, \Theta_4$ are antiholomorphic. Hence, if exists, the $h^{-4}$ terms corresponds to terms
$$
 \partial u_1 \partial u_2 \partial u_3 \partial u_4 \qquad \text{or} \qquad \overline{\partial} u_1 \overline{\partial} u_2 \overline{\partial} u_3 \overline{\partial} u_4
$$
However, these terms do not exist due to the form of $(\partial u_{l_3}\bar{\partial}u_4+ \bar{\partial} u_{l_3} \partial u_4)$ in \eqref{eqT11+T22}.

Indeed, now the term in the expansion with zero phase are of $h^{-3}$ order, where the derivative on one solution does not hit the phase. 

Next, we look at the integral term in the expansion that has the phase with a nondegenerate critical point, i.e.
$$\Theta_1+\Theta_2+\Theta_3+\Theta_4, \qquad\Theta_1+\overline{\Theta_2}+\Theta_3+\overline{\Theta_4}$$
or their conjugates. If the derivatives all hit on the phase of the solution, by stationary phase we would get a $h^{-3}$ evaluation term with an exponential corresponding to 
$$
\partial u_1 \partial u_2 \bar{\partial}u_3 \bar{\partial}u_4 \qquad \text{or}  \qquad \bar{\partial}u_1\bar{\partial}u_2 \partial u_3 \partial u_4
$$
and 
$$
\partial u_1 \bar\partial u_2 \bar \partial u_3 {\partial}u_4 \qquad \text{or} \qquad \bar\partial u_1 \partial u_2  \partial u_3 \bar{\partial}u_4
$$
Now recall \eqref{eqT11+T22}, we observe that all the terms above have a common coefficient $T^{11}+T^{22}$.

Finally, we observe that all the other phase combinations give a phase with no critical point. By non-stationary phase, these terms give $o(h^{-3})$ integrals by integration by parts. 

Now, by the same trick that separates integrals with no exponential and evaluation terms with an exponential $e^{(\Phi-\bar{\Phi})/h}$ in the case $m=1$, we can extract the evaluation term of $h^{-3}$ order, and show
\begin{align}
    T^{11}+T^{22}=0
\end{align}
Combining with \eqref{m2result1}, we get 
$T^{11}, T^{22}$and $ T^{12}$ all vanishes in the domain.

\section{Proof of Proposition: $m\geq 3$ }

Same as in section 4 of \cite{LW24}, by induction we can recover $T^{0 j_1\cdots j_{m-1}}$ where
$j_1,\ldots, j_{m-1} \in \{0,1,2\}$.

It remains to prove $T^{j_1\cdots j_m}=0$ where all the indices $j_k$, $k=1,\ldots, m$, are nonzero. The integral identity is reduced to
\begin{align}\label{eq10}
    \sum_{\left(l_1, \ldots, l_{m+1}\right) \in \pi(m+1)} \sum_{j_1,\ldots j_m=1}^2 \int T^{j_1 \cdots j_m}(x) \partial_{x_{j_1}} u_{l_1} \partial_{x_{j_2}} u_{l_2} \ldots \partial_{x_{j_m}} u_{l_m} \nabla u_{l_{m+1}} \cdot \nabla u_{m+2} =0
\end{align} for all $u$ solving \eqref{cond_equ}. Note that indices in the inner sum now start from $1$.
Since $T^{j_1\cdots j_m}$ is symmetric in exchange of any of its two indices, we have $m+1$ unknown entries $T^{j_1\cdots j_m}$ in \eqref{eq10}. We begin by considering the special case $m=3$ in detail, and then the general case $m$, with some technical details omitted.

\subsection{Case $m=3$}
The integral identity now is
\begin{align}\label{eq10}
    \sum_{\left(l_1, \ldots, l_{4}\right) \in \pi(4)} \sum_{j_1,\ldots j_3=1}^2 \int T^{j_1 \cdots j_3}(x) \partial_{x_{j_1}} u_{l_1} \partial_{x_{j_2}} u_{l_2} \partial_{x_{j_3}} u_{l_3} \nabla u_{l_{4}} \cdot \nabla u_{5} =0
\end{align}
Like before, our solutions shall have the form 
\begin{align*}
u_i &= \frac{1}{\sqrt{\gamma_0}}(e^{\Theta_i/h}(a + h (a_0)_i + (r_1)_i)
      + \overline{e^{\Theta_i/h}(a + h (a_0)_i + (r_1)_i)}
      + e^{\varphi_i/h} (r_2)_i) 
\end{align*}
Now we consider two sets of solutions. In the following expression, we write the choice of $\Theta_1, \Theta_2, \Theta_3, \Theta_4, \Theta_5$ in order.\\

\noindent\textbf{Set I:} 
\begin{align*}
  \lambda f, \lambda f, \lambda f, \Phi-3\lambda f, -\bar{\Phi}
\end{align*}

\noindent\textbf{Set II:} 
\begin{align*}
  \lambda f, \lambda f,  \Phi-2\lambda f, \overline{\lambda f}, -\bar{\Phi} - \overline{\lambda f}
\end{align*}
Here in both sets we choose $\lambda$ such that 
$$
\lambda f, \quad \Phi \pm \lambda f, \quad \Phi \pm 2\lambda f, \quad \Phi \pm 3\lambda f
$$
all have no critical point in $\Omega$. This can be achieved by a similar argument as after \eqref{phases m = 2}.

The phases in the expansion result from adding the phase above or its conjugate in a row. Note that in both choices, the phases add up to $\Phi - \bar{\Phi}$, and replacing a phase by its conjugate in the summation is changing the original sum by a purely imaginary term, so the phases in the expansion are all purely imaginary. Moreover, by the choice of $\lambda$, they can be divided into three types: one with no exponential term (all phases cancels out in the summation), one with a phase with nondegenerate critical point, and the other with a phase with no critical points.
By a similar argument as before, using Set I and Set II of solutions, respectively, we could show
\begin{align*}
T^{111}+3iT^{112}-3T^{122}-iT^{222}&=0\\
    T^{111}-iT^{112}+T^{122}-iT^{222}&=0
\end{align*}
which implies all the entrances vanish (since we also assume all the entrances are real).
\subsection{Case for general $m$}
We will choose $\lfloor \frac{m}{2}\rfloor+1$ set of solutions for case $m$. Each set shall have $m+2$ solutions, as the integral identity indicates.
 
Again, solutions have the form
\begin{align*}
u_i &= \frac{1}{\sqrt{\gamma_0}}(e^{\Theta_i/h}(a + h (a_0)_i + (r_1)_i)
      + \overline{e^{\Theta_i/h}(a + h (a_0)_i + (r_1)_i)}
      + e^{\varphi_i/h} (r_2)_i) 
\end{align*}
with the following choice of phases in the $j-th$ set, $j = 1,2,\ldots, \lfloor \frac{m}{2}\rfloor+1$:

\noindent\textbf{Set j:} 
\begin{align*}
    &\text{For } i=1,2, \dots,m+1-j:\quad \Theta_i = \lambda f \\
    & \Theta_{m+2-j} = \Phi - (m+2-j)\lambda f \\
    & \text{For } i=m+3-j, \dots, m+1: \quad \Theta_i = \overline{\lambda f}\\
    & \Theta_{m+2} = -\bar{\Phi} - (j-1)\overline{\lambda f}
\end{align*}
We shall choose $\lambda$ such that $\lambda f$ and all of $\Phi \pm k\lambda$, $k=1,2,\dots, m$ has no critical point in $\Omega$. For fixed $m$, there are finitely many conditions, and they can be achieved by a similar argument as in previous cases.

By the same proof in section 4 of \cite{LW24},
for $j=1$, we could obtain by the first set of solutions 
\begin{align}\label{eq:coefficientline1}
   \binom{m}{0} T^{1\cdots1}+i\binom{m}{1}T^{1\cdots12}+i^2 \binom{m}{2}T^{1\cdots122}+\cdots+i^{m}\binom{m}{m}T^{2\cdots2}&=0 \quad;
\end{align}
for $j =2, \dots, \lfloor \frac{m}{2}\rfloor+1$, we would obtain
\newcommand{\Tsym}[2]{T^{\underbrace{1\cdots 1}_{#1}\,\underbrace{2\cdots 2}_{#2}}}

\begin{equation}\label{eq:full_linear_system}
\sum_{s=0}^{m} C_j(s)\;\Tsym{s}{m-s} \;=\; 0,
\qquad j=2,3,\dots, \lfloor \frac{m}{2}\rfloor+1,
\end{equation}
where the coefficients are
\begin{align*}\label{eq:coeffs}
C_j(s)
&= \sum_{\substack{p=0,\dots,j-1\\ q=0,\dots,m+2-j-1\\ p+q=m-s}}
\binom{j-1}{p}\binom{m+2-j-1}{q}\, i^{p}\,(-i)^{q}.
\end{align*}
Note that since all $T$ coefficients are real, by taking the conjugate of each equation, we will get the same equation system as in section 4 of \cite{LW24}. Hence, by the same argument as in the above paper, we show that we have a linear independent system of equations, and so every $T$ coefficient vanishes. 

\bibliographystyle{alpha}
\bibliography{ref}

\end{document}